\def\BibTeX{{\rm B\kern-.05em{\sc i\kern-.025em b}\kern-.08em
    T\kern-.1667em\lower.7ex\hbox{E}\kern-.125emX}}
\newtheorem{thm}{Theorem}[section]
\newtheorem{lem}[thm]{Lemma}
\newtheorem{prop}[thm]{Proposition}
\theoremstyle{definition}
\theoremstyle{remark}
\newtheorem{rem}{Remark}[section]
\numberwithin{equation}{section}
    \newcommand{\floor}[1]{\lfloor#1\rfloor}
    \newcommand{\EE}{\mathbb{E}}
    \renewcommand{\Pr}{\operatorname{P}}
    \newcommand{\dto}{\xrightarrow{d}}
    \newcommand{\vto}{\xrightarrow{v}}
    \newcommand{\eind}{\stackrel{d}{=}}
    \newcommand{\rmd}{\mathrm{d}}
\newcommand{\be}{\begin{equation}}
    \newcommand{\ee}{\end{equation}}
\begin{document}

\title[Maxima of linear processes] % running head version
{Maxima of linear processes with heavy-tailed innovations and random coefficients}

%%
% See the complete version of this file, testart.tex, for more
% complicated examples
%
\author{Danijel Krizmani\'{c}}

\address{Danijel Krizmani\'{c}\\ Department of Mathematics\\
        University of Rijeka\\
        Radmile Matej\v{c}i\'{c} 2, 51000 Rijeka\\
        Croatia}
%% Note the doubled @@:
\email{dkrizmanic@math.uniri.hr}

%\thanks{Research supported in part by NSF grant
%CCR-87-10433 and DARPA Contract N00019-89-J-1988.}

%\date{July 25, 1994}

\subjclass[2010]{Primary 60F17; Secondary 60G70}
\keywords{Functional limit theorem, Regular variation, Extremal process, $M_{2}$ topology, Linear process}

%\maketitle

\begin{abstract}
We investigate maxima of linear processes with i.i.d.~heavy-tailed innovations and random coefficients. Using the point process approach we derive functional convergence of the partial maxima stochastic process in the space of non-decreasing c\`{a}dl\`{a}g functions on $[0,1]$ with the Skorohod $M_{1}$ topology.
\end{abstract}

\maketitle

\section{Introduction}
\label{intro}

Consider a strictly stationary sequence of
random variables $(X_{i})$ and denote by $M_{n} = \max \{X_{1}, X_{2}, \ldots, X_{n}\}$, $n \geq 1$, its accompanying sequence of partial
maxima. The principal concern of classical extreme value theory is with asymptotic distributional properties of the maximum $M_{n}$. It is well known that in the i.i.d.~case if there exist normalizing constants $a_{n}>0$ and $b_{n}$ such that
\begin{equation}\label{e:EVT}
  \Pr \Big( \frac{M_{n}-b_{n}}{a_{n}} \leq x \Big) \to G(x) \quad \textrm{as} \ n \to \infty,
 \end{equation}
 where $G$ is assumed non-degenerate, then $G$ necessarily belongs to the class of extreme value distributions (see for instance Gnedenko~\cite{Gn43} and Resnick~\cite{Re87}). In particular, (\ref{e:EVT}) holds with $G(x)=\exp \{-x^{-\alpha} \}$, $x>0$, for some $\alpha >0$, i.e.~the distribution of $X_{1}$ is in the domain of attraction of the Fr\'{e}chet distribution, if and only if $x \mapsto \Pr (X_{1} >x)$ is regularly varying at infinity with index $\alpha$, i.e.
 $$ \lim_{t \rightarrow \infty} \frac{\Pr(X_{1} >tx)}{\Pr(X_{1}>t)} = x^{-\alpha}$$
 for every $x>0$ (see Proposition 1.11 in Resnick~\cite{Re87}). A functional version of this is known to be true as well, the limit
process being an extremal process, and the convergence takes place
in the space of c\`{a}dl\`{a}g functions endowed with the Skorohod
$J_{1}$ topology. More precisely, relation (\ref{e:EVT}) is equivalent to
\begin{equation}\label{e:convPMP}
   M_{n}(\,\cdot\,) = \bigvee_{i=1}^{\lfloor n \cdot
   \rfloor}\frac{X_{i}}{a_{n}} \dto Y(\,\cdot\,)
 \end{equation}
in $D([0,1], \mathbb{R})$, the space of real-valued c\`{a}dl\`{a}g functions on
$[0,1]$, with the Skorohod $J_{1}$ topology, where $Y(\,\cdot\,)$
is an extremal process generated by $G$. If $G$ is the Fr\'{e}chet distribution, then $Y$ has
marginal distributions
$$\Pr (Y(t) \leq x) = e^{-tx^{-\alpha}}, \quad x \geq 0,\,t \in
[0,1].$$
This result was first proved by Lamperti~\cite{La64} (see also Resnick~\cite{Re87}, Proposition 4.20). For convenience we can put  $M_{n}(t)= X_{1}/a_{n}$ (or $M_{n}(t)=0$) for $t \in [0, 1/n)$.

In the dependent case, Adler~\cite{Ad78} obtained $J_{1}$ extremal functional convergence with the weak dependence condition similar to "asymptotic independence" condition introduced by Leadbetter~\cite{Le74}. For stationary sequences of jointly regularly varying random variables Basrak and Tafro~\cite{BaTa16} showed the invariance principle for the partial maximum process $M_{n}(\cdot)$ in $D([0,1], \mathbb{R})$ with the Skorohod $M_{1}$ topology (cf.~Krizmani\'{c}~\cite{Kr14}).

For a special class of weakly dependent random variables, the linear processes or moving averages processes with i.i.d.~heavy-tailed innovations (and deterministic coefficients), it is known that (\ref{e:convPMP}) holds, see for instance Resnick~\cite{Re87}, Proposition 4.28. In this paper we aim to obtain the functional convergence as in (\ref{e:convPMP}) for linear processes with random coefficients. Due to possible clustering of large values, the $J_{1}$ topology becomes inappropriate, and hence we will use the weaker Skorohod $M_{1}$ topology. In the proofs of our results we will use some methods and results which appear in Basrak and Krizmani\'{c}~\cite{BaKr14}, where they obtained functional convergence of partial sum processes with respect to Skorohod $M_{2}$ topology; Krizmani\'{c}~\cite{Kr18b}, where joint functional convergence of partial sums and maxima for linear processes was investigated and Krizmani\'{c}~\cite{Kr19}, where a functional limit theorem for sums of linear processes with heavy-tailed innovations and random coefficients was established.
For some related results on limit theory for sums of moving averages with random coefficients see Kulik~\cite{Ku06}.

In general, functional $M_{1}$ convergence of partial sum processes fails to hold. Clusters of large values in the sequence $(X_{n})$ may contain positive and negative values yielding the corresponding partial sum processes having jumps of opposite signs within temporal clusters of large values, and this precludes the $M_{1}$ convergence. For instance, this occurs for linear process with i.i.d.~heavy-tailed innovations $Z_{i}$ and deterministic coefficients $C_{0}=1$, $C_{1}=-1$, $C_{2}=1$ and $C_{i}=0$ for $i \geq 3$:
$$ X_{i}= Z_{i} - Z_{i-1} + Z_{i-2}, \qquad i \in \mathbb{Z}.$$
But in this case the convergence in distribution of the partial sum processes in the weaker $M_{2}$ topology can be shown to hold, see Krizmani\'{c}~\cite{Kr19}. For partial maxima processes we do not have similar problems with positive and negative values in clusters of big values since these processes are non-decreasing and thus only jumps with positive sign appear in them, which means one can have functional $M_{1}$ convergence.

 The paper is organized as follows. In Section~\ref{S:Pre} we introduce basic notions about linear processes, regular variation and Skorohod topologies. In Section~\ref{S:FLT} we derive functional convergence of the partial maxima stochastic process for finite order linear processes with i.i.d.~heavy-tailed innovations and random coefficients, and then we
extend this result to infinite order linear processes.

\section{Preliminaries}\label{S:Pre}

\emph{Linear processes}. Let $(Z_{i})_{i \in \mathbb{Z}}$ be a sequence of i.i.d.~random variables with regularly varying balanced tails, i.e.
\begin{equation}\label{e:regvar}
 \Pr(|Z_{i}| > x) = x^{-\alpha} L(x), \qquad x>0,
\end{equation}
for some $\alpha >0$ and slowly varying function $L$, and
\be\label{e:pr}
  \lim_{x \to \infty} \frac{\Pr(Z_i > x)}{\Pr(|Z_i| > x)}=p, \qquad
    \lim_{x \to \infty} \frac{\Pr(Z_i < -x)}{\Pr(|Z_i| > x)}=r,
\ee
where $p \in [0,1]$ and $p+r=1$.
Let $(a_{n})$ be a sequence of positive real numbers such that
\be\label{eq:niz}
n \Pr (|Z_{1}|>a_{n}) \to 1,
\ee
as $n \to \infty$. Then regular
variation of $Z_{i}$ can be expressed in terms of
vague convergence of measures on $\EE = \overline{\mathbb{R}} \setminus \{0\}$:
\begin{equation}
  \label{eq:onedimregvar}
  n \Pr( a_n^{-1} Z_i \in \cdot \, ) \vto \mu( \, \cdot \,) \qquad \textrm{as} \ n \to \infty,
\end{equation}
where $\mu$ is a measure on $\EE$  given by
\begin{equation}
\label{eq:mu}
  \mu(\rmd x) = \bigl( p \, 1_{(0, \infty)}(x) + r \, 1_{(-\infty, 0)}(x) \bigr) \, \alpha |x|^{-\alpha-1} \, \rmd x.
\end{equation}

We study linear processes with random coefficients, defined by
\begin{equation}\label{e:MArandom}
X_{i} = \sum_{j=0}^{\infty}C_{j}Z_{i-j}, \qquad i \in \mathbb{Z},
\end{equation}
where
$(C_{i})_{i \geq 0 }$ is a sequence of random variables independent of $(Z_{i})$ such that the above series is a.s.~convergent. One sufficient condition for that, which is commonly used in the literature is
\begin{equation}\label{e:momcond}
\sum_{j=0}^{\infty} \mathrm{E} |C_{j}|^{\delta} < \infty \qquad \textrm{for some}  \ \delta < \alpha,\,0 < \delta \leq 1.
\end{equation}

The regular variation property and Karamata's theorem imply $\mathrm{E}|Z_{1}|^{\beta} < \infty$ for every $\beta \in (0,\alpha)$ (cf.~Bingham et al.~\cite{BiGoTe89}, Proposition 1.5.10), which together with the moment condition (\ref{e:momcond}) yield the a.s.~convergence of the series in (\ref{e:MArandom}):
$$ \mathrm{E}|X_{i}|^{\delta} \leq \sum_{j=0}^{\infty} \mathrm{E}|C_{j}|^{\delta} \mathrm{E}|Z_{i-j}|^{\delta} = \mathrm{E}|Z_{1}|^{\delta} \sum_{j=0}^{\infty}\mathrm{E}|C_{j}|^{\delta} < \infty.$$
The same holds with the following moment conditions: if $\alpha < 1$ then there exists $\delta \in (0, \alpha)$ such that $\alpha + \delta <1$ and
$$ \sum_{j=0}^{\infty} \mathrm{E} |C_{j}|^{\alpha+\delta} < \infty, \qquad \sum_{j=0}^{\infty} \mathrm{E} |C_{j}|^{\alpha-\delta} < \infty,$$
and if $\alpha > 1$ then there exists $\delta >0$ such that
$$ \sum_{j=0}^{\infty} \mathrm{E} [|C_{j}|^{\alpha+\delta}]^{1/(\alpha+\delta)} < \infty, \qquad \sum_{j=0}^{\infty} \mathrm{E} [|C_{j}|^{\alpha-\delta}]^{1/(\alpha+\delta)} < \infty,$$
see Kulik~\cite{Ku06}.
Another condition that assures the a.s.~convergence of the series in the definition of linear processes in (\ref{e:MArandom}) with
$$ \begin{array}{rl}
                                   \mathrm{E}(Z_{1})=0, & \quad \textrm{if} \ \alpha >1,\\[0.4em]
                                   Z_{1} \ \textrm{is symmetric}, & \quad \textrm{if} \ \alpha =1,
                                 \end{array}$$
 and a.s.~bounded coefficients
 can be deduced from the results in Astrauskas~\cite{At83} for linear processes with deterministic coefficients:
$$ \sum_{j =0}^{\infty}c_{j}^{\alpha} L(c_{j}^{-1}) < \infty,$$
and additionally $\sum_{j}c_{j}^{2} < \infty$ for $\alpha >2$ and $\sum_{j}c_{j} ( 1 \vee L(c_{j}^{-1/2})) < \infty$ for $\alpha=2$,
where $(c_{j})$ is a sequence of positive real numbers such that $|C_{j}| \leq c_{j}$ a.s.~for all $j$, and $L$ as in (\ref{e:regvar}) (c.f.~Balan et al.~\cite{BJL16}).

\emph{Skorohod topologies}. We start by considering $D([0,1], \mathbb{R}^{d})$, the space of all right-continuous $\mathbb{R}^{d}$--valued functions on $[0,1]$ with left limits. For $x \in D([0,1],
\mathbb{R}^{d})$ the completed (thick) graph of $x$ is the set
\[
  G_{x}
  = \{ (t,z) \in [0,1] \times \mathbb{R}^{d} : z \in [[x(t-), x(t)]]\},
\]
where $x(t-)$ is the left limit of $x$ at $t$ and $[[a,b]]$ is the product segment, i.e.
$[[a,b]]=[a_{1},b_{1}] \times [a_{2},b_{2}] \ldots \times [a_{d},b_{d}]$
for $a=(a_{1}, a_{2}, \ldots, a_{d}), b=(b_{1}, b_{2}, \ldots, b_{d}) \in
\mathbb{R}^{d}$, and $[a_{i}, b_{i}]$ coincides with the closed interval $[a_{i} \wedge b_{i}, a_{i} \vee b_{i}]$, with $c \wedge d = \min \{c,d \}$ and $c \vee d = \max \{c, d\}$ for $c, d \in \mathbb{R}$. We define an
order on the graph $G_{x}$ by saying that $(t_{1},z_{1}) \le
(t_{2},z_{2})$ if either (i) $t_{1} < t_{2}$ or (ii) $t_{1} = t_{2}$
and $|x_{j}(t_{1}-) - z_{1j}| \le |x_{j}(t_{2}-) - z_{2j}|$
for all $j=1, 2, \ldots, d$. A weak parametric representation
of the graph $G_{x}$ is a continuous nondecreasing function $(r,u)$
mapping $[0,1]$ into $G_{x}$, with $r$ being the
time component and $u$ being the spatial component, such that $r(0)=0,
r(1)=1$ and $u(1)=x(1)$. Let $\Pi_{w}(x)$ denote the set of weak
parametric representations of the graph $G_{x}$. For $x_{1},x_{2}
\in D([0,1], \mathbb{R}^{d})$ define
\[
  d_{w}(x_{1},x_{2})
  = \inf \{ \|r_{1}-r_{2}\|_{[0,1]} \vee \|u_{1}-u_{2}\|_{[0,1]} : (r_{i},u_{i}) \in \Pi_{w}(x_{i}), i=1,2 \},
\]
where $\|x\|_{[0,1]} = \sup \{ \|x(t)\| : t \in [0,1] \}$. Now we
say that $x_{n} \to x$ in $D([0,1], \mathbb{R}^{d})$ for a sequence
$(x_{n})$ in the weak Skorohod $M_{1}$ (or shortly $WM_{1}$)
topology if $d_{w}(x_{n},x)\to 0$ as $n \to \infty$.

If we replace above the graph $G_{x}$ with the completed (thin) graph
\[
  \Gamma_{x}
  = \{ (t,z) \in [0,1] \times \mathbb{R}^{d} : z= \lambda x(t-) + (1-\lambda)x(t) \ \text{for some}\ \lambda \in [0,1] \},
\]
and weak parametric representations with strong parametric representations (i.e.~continuous nondecreasing functions $(r,u)$ mapping $[0,1]$ onto $\Gamma_{x}$), then we obtain the standard (or strong) Skorohod $M_{1}$ topology. This topology is induced by the metric
$$d_{M_{1}}(x_{1},x_{2})
  = \inf \{ \|r_{1}-r_{2}\|_{[0,1]} \vee \|u_{1}-u_{2}\|_{[0,1]} : (r_{i},u_{i}) \in \Pi_{s}(x_{i}), i=1,2 \},$$
where $\Pi_{s}(x)$ is the set of strong parametric representations of the graph $\Gamma_{x}$. The standard $M_{1}$ topology is stronger than the weak $M_{1}$ topology on $D([0,1],
\mathbb{R}^{d})$, but they coincide for $d=1$.

The $WM_{1}$ topology coincides with the topology induced by the metric
\begin{equation}\label{e:defdp}
 d_{p}(x_{1},x_{2})=\max \{ d_{M_{1}}(x_{1j},x_{2j}) :
j=1,\ldots,d\}
\end{equation}
 for $x_{i}=(x_{i1}, \ldots, x_{id}) \in D([0,1],
 \mathbb{R}^{d})$ and $i=1,2$. The metric $d_{p}$ induces the product topology on $D([0,1], \mathbb{R}^{d})$.

Using completed graphs and their parametric representations the Skorohod $M_{2}$ topology can also be defined. Here we give only its characterization by the Hausdorff metric on the space of graphs: for $x_{1},x_{2} \in D[0,1]$ the $M_{2}$ distance between $x_{1}$ and $x_{2}$ is given by
$$ d_{M_{2}}(x_{1}, x_{2}) = \bigg(\sup_{a \in \Gamma_{x_{1}}} \inf_{b \in \Gamma_{x_{2}}} d(a,b) \bigg) \vee \bigg(\sup_{a \in \Gamma_{x_{2}}} \inf_{b \in \Gamma_{x_{1}}} d(a,b) \bigg),$$
where $d$ is the metric on $\mathbb{R}^{2}$ defined by $d((x_{1},y_{1}),(x_{2},y_{2}))=|x_{1}-x_{2}| \vee |y_{1}-y_{2}|$ for $(x_{i},y_{i}) \in \mathbb{R}^{2},\,i=1,2$.
The metric $d_{M_{2}}$ induces the $M_{2}$ topology, which is weaker than the more frequently used $M_{1}$ topology. For more details and discussion on the $M_{1}$ and $M_{2}$ topologies we refer to Whitt~\cite{Whitt02}, sections 12.3-12.5.

Since the sample paths of the partial maximum process $M_{n}(\cdot)$ that we study in this paper are non-decreasing, we will restrict our attention to the subspace $D_{\uparrow}([0,1], \mathbb{R}^{d})$ of functions $x$ in $D[0,1], \mathbb{R}^{d})$ for which the coordinate functions $x_{i}$ are non-decreasing for all $i=1,\ldots,d$.

In the next section we will use the following two lemmas about the $M_{1}$ continuity of multiplication and maximum of two c\`{a}dl\`{a}g functions. The first one is based on Theorem 13.3.2 in Whitt~\cite{Whitt02}, and the second one follows easily from the fact that for monotone functions $M_{1}$ convergence is equivalent to point-wise convergence in a dense subset of $[0,1]$ including $0$ and $1$ (cf.~Corollary 12.5.1 in Whitt~\cite{Whitt02}). Denote by $\textrm{Disc}(x)$ the set of discontinuity points of $x \in D([0,1], \mathbb{R})$.

%In the next section we will use the following lemma, which we prove in Appendix.

\begin{lem}\label{l:contmultpl}
Suppose that $x_{n} \to x$ and $y_{n} \to y$ in $D[0,1]$ with the $M_{1}$ topology. If for each $t \in \textrm{Disc}(x) \cap \textrm{Disc}(y)$, $x(t)$, $x(t-)$, $y(t)$ and $y(t-)$ are all nonnegative and $[x(t)-x(t-)][y(t)-y(t-)] \geq 0$, then $x_{n}y_{n} \to xy$ in $D[0,1]$ with the $M_{1}$ topology, where $(xy)(t) = x(t)y(t)$ for $t \in [0,1]$.
\end{lem}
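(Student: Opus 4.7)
The plan is to derive this from Theorem 13.3.2 in Whitt~\cite{Whitt02}, which gives the $M_{1}$ continuity of the multiplication map $(x,y) \mapsto xy$ on $D[0,1] \times D[0,1]$ at any pair $(x,y)$ satisfying a compatibility condition at every common discontinuity of $x$ and $y$. All the work therefore goes into translating the hypotheses of the lemma into the form required by that theorem.

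First I would take strong parametric representations $(r_{n},u_{n}) \in \Pi_{s}(x_{n})$ and $(\tilde r_{n},v_{n}) \in \Pi_{s}(y_{n})$ realizing the $M_{1}$ convergences, with $(r_{n},u_{n}) \to (r,u)$ and $(\tilde r_{n},v_{n}) \to (\tilde r,v)$ uniformly, where $(r,u) \in \Pi_{s}(x)$ and $(\tilde r,v) \in \Pi_{s}(y)$. At any $t \notin \textrm{Disc}(x) \cap \textrm{Disc}(y)$ at most one of $x$ and $y$ jumps, so the completed graph of $xy$ is automatically traversed monotonically near $t$ and no extra assumption is required. The only obstruction lies at $t \in \textrm{Disc}(x) \cap \textrm{Disc}(y)$, where the relevant product segment joins $x(t-)y(t-)$ to $x(t)y(t)$. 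Nonnegativity of $x(t-), x(t), y(t-), y(t)$ keeps the bilinear product map coordinate-wise monotone on the rectangle $[[x(t-),x(t)]] \times [[y(t-),y(t)]]$, and the sign condition $[x(t)-x(t-)][y(t)-y(t-)] \geq 0$ ensures that as the time parameter crosses $t$ both spatial coordinates move in the same (weak) direction, so that $u(s)v(s)$ traverses the product segment monotonically across the jump. These are exactly the conditions imposed by Whitt's theorem, and applying it yields $x_{n}y_{n} \to xy$ in $(D[0,1], M_{1})$.

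The main subtlety is the synchronization of the two time scales, since in general $r_{n} \neq \tilde r_{n}$ and the naive pointwise product $u_{n} v_{n}$ need not be a parametric representation of $x_{n} y_{n}$. This is handled inside Whitt's proof by a joint reparametrization on the pair graph in $\mathbb{R}^{2}$, using the fact that monotone time reparametrizations preserve $M_{1}$ distance. My role here is only to verify that the hypotheses of that reparametrization argument are satisfied, which is exactly what the nonnegativity and sign-compatibility assumptions accomplish.
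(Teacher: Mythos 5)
Your proposal is correct and takes essentially the same route as the paper: the paper gives no independent proof of Lemma \ref{l:contmultpl} but states that it is based on Theorem 13.3.2 in Whitt, and your argument amounts to verifying that the nonnegativity and sign-compatibility hypotheses are exactly what that theorem requires at common discontinuities. Your additional remarks on the synchronization of parametric representations correctly identify where the work happens inside Whitt's proof, so nothing is missing.
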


\begin{lem}\label{l:weakM2transf}
The function $h \colon D_{\uparrow}([0,1], \mathbb{R}^{2}) \to D_{\uparrow}([0,1], \mathbb{R})$ defined by
$h(x,y)= x \vee y$, where
$$ (x \vee y)(t) = x(t) \vee y(t), \qquad t \in [0,1],$$
is continuous
when $D_{\uparrow}([0,1], \mathbb{R}^{2})$ is endowed with the weak $M_{1}$ topology and $D_{\uparrow}([0,1], \mathbb{R})$ is endowed with the standard $M_{1}$ topology.
\end{lem}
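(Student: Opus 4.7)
The plan is to reduce continuity of the coordinate-wise maximum to a statement about pointwise convergence on a dense subset, exploiting that all functions involved are non-decreasing.

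First, I would take a sequence $(x_n, y_n)$ in $D_\uparrow([0,1], \mathbb{R}^2)$ converging to $(x, y)$ in the weak $M_1$ topology. By the characterization of the weak $M_1$ topology via the product metric $d_p$ recalled in (\ref{e:defdp}), this is equivalent to $x_n \to x$ and $y_n \to y$ separately in $D([0,1], \mathbb{R})$ with the standard $M_1$ topology (which, for $d=1$, coincides with the weak $M_1$ topology, as noted in the preliminaries).

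Next, I would apply Corollary 12.5.1 of Whitt~\cite{Whitt02} to each monotone sequence: there exist countable sets $D_x, D_y \subseteq [0,1]$ (namely the discontinuity sets of the monotone limits $x$ and $y$) such that $x_n(t) \to x(t)$ for every $t \notin D_x$ and $y_n(t) \to y(t)$ for every $t \notin D_y$, with convergence at $t = 0$ and $t = 1$ in any case. Setting $E = \{0, 1\} \cup ([0,1] \setminus (D_x \cup D_y))$, which is dense in $[0,1]$ and contains both endpoints, continuity of the binary maximum on $\mathbb{R}^2$ yields
\[
(x_n \vee y_n)(t) = x_n(t) \vee y_n(t) \to x(t) \vee y(t) = (x \vee y)(t) \qquad \text{for every } t \in E.
\]

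Finally, since $x_n \vee y_n$ and $x \vee y$ are again non-decreasing (hence lie in $D_\uparrow([0,1], \mathbb{R})$), a second application of Corollary 12.5.1 of Whitt~\cite{Whitt02}, now in the converse direction, upgrades this pointwise convergence on the dense set $E$ (containing $0$ and $1$) back to standard $M_1$ convergence $x_n \vee y_n \to x \vee y$, establishing the desired continuity of $h$. I expect no essential obstacle: monotonicity bypasses parametric representations altogether, and the complications behind the hypotheses of Lemma~\ref{l:contmultpl} (opposite-sign jumps destroying $M_1$ convergence) simply cannot arise here, since non-decreasing functions admit only upward jumps.
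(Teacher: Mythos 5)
Your argument is correct and is essentially the paper's own: the paper justifies Lemma~\ref{l:weakM2transf} precisely by noting that weak $M_{1}$ convergence of the pair is coordinate-wise $M_{1}$ convergence and that, for monotone functions, $M_{1}$ convergence is equivalent to pointwise convergence on a dense subset of $[0,1]$ containing $0$ and $1$ (Corollary 12.5.1 in Whitt), which is exactly the two-way use of that corollary you spell out. Your write-up simply makes the paper's one-line sketch explicit, with no gaps.
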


\section{Functional limit theorems}
\label{S:FLT}

Let $(Z_{i})_{i \in \mathbb{Z}}$ be an i.i.d.~sequence of regularly varying random variables with index $\alpha >0$, and $(C_{i})_{i \geq 0 }$ a sequence of random variables independent of $(Z_{i})$ such that the series defining the linear process
$$ X_{t} = \sum_{i=0}^{\infty}C_{i}Z_{t-i}, \qquad t \in \mathbb{Z},$$
is a.s.~convergent. Define the corresponding partial maximum process by
\be\label{eq:defWn}
M_{n}(t) = \left\{ \begin{array}{cc}
                                   \displaystyle \frac{1}{a_{n}} \bigvee_{i=1}^{\floor {nt}}X_{i}, & \quad  \displaystyle t \geq \frac{1}{n},\\[1.2em]
                                   \displaystyle \frac{X_{1}}{a_{n}}, & \quad \displaystyle  t < \frac{1}{n},
                                 \end{array}\right.
%\frac{1}{a_{n}} \bigvee_{i=1}^{\floor {nt}}X_{i}, \qquad t \in [0,1],
\ee
for $t \in [0,1]$,
with the normalizing sequence $(a_n)$ as in~\eqref{eq:niz}. Let
\begin{equation}\label{e:Cplusminus}
C_{+}= \max \{ C_{j} \vee 0 : j \geq 0 \} \qquad \textrm{and} \qquad C_{-}= \max \{ -C_{j} \vee 0 : j \geq 0 \}.
\end{equation}
Before the main theorem we have two auxiliary results. Define the maximum functional
$ \Phi \colon \mathbf{M}_{p}([0,1] \times \EE) \to D_{\uparrow}([0,1], \mathbb{R}^{2})$
by
$$ \Phi \Big(\sum_{i}\delta_{(t_{i}, x_{i})} \Big) (t)
  =  \Big(  \bigvee_{t_{i} \leq t}|x_{i}| 1_{\{ x_{i}>0 \}},  \bigvee_{t_{i} \leq t} |x_{i}| 1_{\{ x_{i}<0 \}} \Big)$$
  for $t \in [0,1]$
(with the convention $\vee \emptyset = 0$), where the space $\mathbf{M}_p([0,1] \times \EE)$ of Radon point
measures on $[0,1] \times \EE$ is equipped with the vague
topology (see Chapter 3 in Resnick~{Re87}).

\begin{prop}\label{p:contfunct}
The maximum functional $\Phi \colon \mathbf{M}_{p}([0,1] \times \EE) \to D_{\uparrow}([0,1], \mathbb{R}^{2}) $ is continuous on the set
\begin{eqnarray*}
% \nonumber to remove numbering (before each equation)
  \Lambda &=&  \{ \eta \in \mathbf{M}_{p}([0,1] \times \EE) :
   \eta ( \{0,1 \} \times \EE) = \eta ([0,1] \times \{ \pm \infty \}) = 0 \},
\end{eqnarray*}
when $D_{\uparrow}([0,1], \mathbb{R}^{2})$ is endowed with the weak $M_{1}$ topology
\end{prop}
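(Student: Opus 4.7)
The plan is to reduce continuity of $\Phi$ at $\eta \in \Lambda$ to coordinatewise pointwise convergence and then exploit vague convergence on suitable continuity sets in $[0,1]\times\EE$.

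First, since the weak $M_1$ topology on $D_{\uparrow}([0,1],\R^2)$ coincides with the product of standard $M_1$ topologies on each coordinate by (\ref{e:defdp}), it suffices to show that whenever $\eta_n\vto\eta$ with $\eta\in\Lambda$, each coordinate of $\Phi(\eta_n)$ converges in the standard $M_1$ topology to the corresponding coordinate of $\Phi(\eta)$. Write $\Phi(\eta)=(\Phi^+(\eta),\Phi^-(\eta))$; both are non-decreasing c\`adl\`ag functions, so by the characterization of $M_1$ convergence for monotone functions (Corollary 12.5.1 in Whitt~\cite{Whitt02}) it is enough to verify pointwise convergence at $0$, at $1$, and at all points of a dense subset of $[0,1]$. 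As that dense set I would take $D=\{t\in[0,1]:\eta(\{t\}\times\EE)=0\}$; since $\eta$ is a Radon point measure, only countably many $t$'s are charged, so $D$ is dense, and $0,1\in D$ by $\eta\in\Lambda$.

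Fix $t\in D$; I focus on $\Phi^+$, the argument for $\Phi^-$ being identical. Pick $u>0$ from the (dense) set of levels with $\eta([0,1]\times\{u\})=0$ and consider the relatively compact set $A_{t,u}=[0,t]\times(u,\infty]$ in $[0,1]\times\EE$. Its topological boundary lies in $(\{t\}\times[u,\infty])\cup([0,t]\times\{u\})\cup([0,1]\times\{\infty\})$, and each piece is $\eta$-null (the first by $t\in D$, the second by the choice of $u$, the third by $\eta\in\Lambda$). Hence $A_{t,u}$ is a continuity set for $\eta$, and a standard consequence of vague convergence of point measures yields that, for $n$ large, $\eta_n(A_{t,u})=\eta(A_{t,u})$ and the points of $\eta_n$ in $A_{t,u}$ can be enumerated so as to converge to those of $\eta$ in $A_{t,u}$; in particular the maximum of their second coordinates converges. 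Denoting by $\Phi^+_u$ the truncated version where the supremum is restricted to $x_i>u$, this gives $\Phi^+_u(\eta_n)(t)\to\Phi^+_u(\eta)(t)$.

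Finally, I would let $u\downarrow 0$ along the admissible levels to recover the full supremum. If $\Phi^+(\eta)(t)=v>0$, then $\Phi^+_u(\eta)(t)=v$ for any admissible $u<v$, yielding $\liminf_n \Phi^+(\eta_n)(t)\geq v$; for any admissible $u'>v$, the continuity-set bound $\eta_n(A_{t,u'})\to 0$ combined with the Portmanteau estimate $\limsup_n \eta_n([0,1]\times\{\infty\})\leq \eta([0,1]\times\{\infty\})=0$ gives $\limsup_n\Phi^+(\eta_n)(t)\leq u'$, and letting $u'\downarrow v$ finishes the case; the case $v=0$ follows from the $\limsup$ bound alone. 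The main obstacle I anticipate is precisely in the last step: one must rule out mass of $\eta_n$ escaping to $\{x=\infty\}$ or accumulating near the truncation level, and this is exactly the role played by the two conditions defining $\Lambda$ together with the freedom to choose $u$ outside an at most countable set.
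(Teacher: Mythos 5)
Your proposal is correct and follows essentially the same route as the paper: reduce weak $M_{1}$ convergence to coordinatewise $M_{1}$ convergence via the product metric, use monotonicity and Corollary 12.5.1 in Whitt to reduce to pointwise convergence on a dense set of non-charged times containing $0$ and $1$, and verify that convergence by matching atoms of $\eta_{n}$ and $\eta$ on relatively compact continuity sets $[0,t]\times\{|x|>u\}$ with $u$ chosen outside the countable set of charged levels. Your liminf/limsup sandwich in the truncation level is a cosmetic variant of the paper's direct $\epsilon$-estimate (and the extra Portmanteau bound for mass at $\{\pm\infty\}$ is already subsumed by the relative compactness of $[0,t]\times(u',\infty]$ in $[0,1]\times\EE$), so no genuine difference or gap.
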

\begin{proof}
Take an arbitrary $\eta \in \Lambda$ and suppose that $\eta_{n} \vto \eta$ as $n \to \infty$ in $\mathbf{M}_p([0,1] \times
\EE)$. We need to show that
$\Phi(\eta_n) \to \Phi(\eta)$ in $D_{\uparrow}([0,1],
\mathbb{R}^{2})$ according to the $WM_1$ topology. By Theorem 12.5.2 in Whitt~\cite{Whitt02}, it suffices to prove that,
as $n \to \infty$,
$$ d_{p}(\Phi(\eta_{n}), \Phi(\eta)) =
\max_{k=1, 2}d_{M_{1}}(\Phi_{k}(\eta_{n}),
\Phi_{k}(\eta)) \to 0.$$
Let
$$T= \{ t \in [0,1] : \eta (\{t\} \times \EE) = 0 \}.$$
Since $\eta$ is a Radon point measure, the set $T$ is dense in $[0,1]$. Fix $t \in T$ and take $\epsilon >0$ such that $\eta([0,t] \times \{\pm \epsilon\})=0$. Since $\eta$ is a Radon point measure, we can arrange that, letting $\epsilon \downarrow 0$, the convergence to $0$ is through a sequence of values $(\epsilon_{j})$ such that $\eta([0,t] \times \{\pm \epsilon_{j}\})=0$ for all $j \in \mathbb{N}$. For $u>0$ let $\mathbb{E}_{u} = \mathbb{E} \setminus (-u,u)$. Since the set $[0,t] \times \EE_{\epsilon}$ is relatively compact in
$[0,1] \times \EE$, there exists a nonnegative integer
$k=k(\eta)$ such that
$$ \eta ([0,t] \times \EE_{\epsilon}) = k < \infty.$$
By assumption, $\eta$ does not have any atoms on the border of the
set $[0,t] \times \EE_{\epsilon}$, and therefore by Lemma 7.1 in Resnick~\cite{Resnick07} there exists a positive integer $n_{0}$ such
that
$$ \eta_{n} ([0,t] \times \EE_{\epsilon})=k \qquad \textrm{for all} \ n \geq n_{0}.$$
Let
$(t_{i},x_{i})$ for $i=1,\ldots,k$ be the atoms of $\eta$ in
$[0,t] \times \EE_{\epsilon}$. By the same lemma, the $k$ atoms
$(t_{i}^{(n)}, x_{i}^{(n)})$ of $\eta_{n}$ in $[0,t] \times \EE_{\epsilon}$ (for $n \geq n_{0}$) can be labeled in such a way that
for every $i \in \{1,\ldots,k\}$ we have
$$ (t_{i}^{(n)}, x_{i}^{(n)}) \to (t_{i},x_{i}) \qquad \textrm{as}
\ n \to \infty.$$ In particular, for any $\delta >0$ there exists a
positive integer $n_{\delta} \geq n_{0}$ such that for all $n \geq
n_{\delta}$,
\begin{equation*}\label{e:etaconv}
 |t_{i}^{(n)} - t_{i}| < \delta \quad \textrm{and} \quad
 |x_{i}^{(n)}- x_{i}| < \delta \qquad \textrm{for} \ i=1,\ldots,k.
\end{equation*}
If $k=0$, then (for large $n$) the atoms of $\eta$ and $\eta_{n}$ in $[0,t] \times \EE$ are all situated in $[0,t] \times (-\epsilon, \epsilon)$. Hence
$ \Phi_{1}(\eta)(t) \in [0,\epsilon)$ and $ \Phi_{1}(\eta_{n})(t) \in [0, \epsilon)$, which imply
\begin{equation}\label{e:conv1}
  |\Phi_{1}(\eta_{n})(t) - \Phi_{1}(\eta)(t)| < \epsilon.
\end{equation}
If $k \geq 1$, take $\delta = \epsilon$. Note that $|x_{i}^{(n)}-x_{i}| < \delta$ implies $x_{i}^{(n)} >0$ iff $x_{i} >0$. Hence we have
\begin{eqnarray}\label{e:conv2}
  \nonumber |\Phi_{1}(\eta_{n})(t) - \Phi_{1}(\eta)(t)| &=& \bigg| \bigvee_{i=1}^{k}|x_{i}^{(n)}| 1_{\{ x_{i}^{(n)}>0\}} - \bigvee_{i=1}^{k}|x_{i}| 1_{\{ x_{i}>0\}} \bigg|\\[0.5em]
   \nonumber &\leq& \bigvee_{i=1}^{k} \Big| (|x_{i}^{(n)}|-|x_{i}|) 1_{\{ x_{i}>0\}} \Big|\\[0.5em]
    &  \leq &  \bigvee_{i=1}^{k}
    |x_{i}^{(n)}-x_{i}| < \epsilon,
\end{eqnarray}
    where the first inequality above follows from the elementary inequality
 \begin{equation*}\label{e:maxineq}
 \Big| \bigvee_{i=1}^{k}a_{i} - \bigvee_{i=1}^{k}b_{i} \Big| \leq
\bigvee_{i=1}^{k}|a_{i}-b_{i}|,
 \end{equation*}
which holds for arbitrary real numbers $a_{1}, \ldots, a_{k}, b_{1},
\ldots, b_{k}$.
Therefore from (\ref{e:conv1}) and (\ref{e:conv2}) we obtain
 $$\limsup_{n \to \infty}|\Phi_{1}(\eta_{n})(t) -
 \Phi_{1}(\eta)(t)| \leq \epsilon,$$
  and letting $\epsilon \to 0$ it follows that
 $\Phi_{1}(\eta_{n})(t) \to \Phi_{1}(\eta)(t)$ as $n \to
 \infty$. Since $\Phi_{1}(\eta)$ and $\Phi_{1}(\eta_{n})$ are nondecreasing functions, and by Corollary 12.5.1 in Whitt~\cite{Whitt02} $M_{1}$ convergence for monotone functions is equivalent to point-wise convergence in a dense subset of points plus convergence at the endpoints, we conclude that $d_{M_{1}}(\Phi_{1}(\eta_{n}),
 \Phi_{1}(\eta)) \to 0$ as $n \to \infty$. In the same manner we obtain $d_{M_{1}}(\Phi_{2}(\eta_{n}),
 \Phi_{2}(\eta)) \to 0$, and therefore we conclude that $\Phi$ is continuous at $\eta$.
\end{proof}

\begin{prop}\label{p:FLT}
Let $(X_{i})$ be a linear process defined by
$$ X_{i} = \sum_{j=0}^{\infty}C_{j}Z_{i-j}, \qquad i \in \mathbb{Z},$$
where $(Z_{i})_{i \in \mathbb{Z}}$ is an i.i.d.~sequence of random variables satisfying $(\ref{e:regvar})$ and $(\ref{e:pr})$ with $\alpha >0$, and
 $(C_{i})_{i \geq 0 }$ is a sequence of random variables independent of $(Z_{i})$ such that the series defying the above linear process is a.s.~convergent.
Let $$ W_{n}(t) := \bigvee_{i=1}^{\floor {nt}}\frac{ |Z_{i}|}{a_{n}}(C_{+}1_{\{ Z_{i} >0 \}} + C_{-} 1_{\{ Z_{i}<0 \}}), \qquad t \in [0,1],$$
with $C_{+}$ and $C_{-}$ defined in $(\ref{e:Cplusminus})$.
Then, as $n \to \infty$,
\begin{equation}\label{e:pomkonv}
 W_{n}(\,\cdot\,) \dto  C^{(1)}W^{(1)}(\,\cdot\,) \vee C^{(2)}W^{(2)}(\,\cdot\,)
\end{equation}
 in $D_{\uparrow}[0,1] :=D_{\uparrow}([0,1], \mathbb{R})$ with the $M_{1}$ topology,
where $W^{(1)}$ and $W^{(2)}$ are extremal processes with exponent measures
$p \alpha x^{-\alpha-1}1_{(0,\infty)}(x)\,dx$ and $r \alpha x^{-\alpha-1}1_{(0,\infty)}(x)\,dx$ respectively, with $p$ and $r$ defined in $(\ref{e:pr})$, and $(C^{(1)}, C^{(2)})$ is a random vector, independent of $(W^{(1)}, W^{(2)})$, such that
$(C^{(1)}, C^{(2)}) \eind (C_{+}, C_{-})$.
\end{prop}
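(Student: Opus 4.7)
The plan is to prove \eqref{e:pomkonv} via the point-process approach. Introduce the time-space point process $N_n := \sum_{i=1}^n \delta_{(i/n,\,Z_i/a_n)}$ on $[0,1]\times\EE$. From the regular variation in \eqref{eq:onedimregvar} and the i.i.d.\ structure of $(Z_i)$, a classical result (see Resnick~\cite{Re87}, Proposition~3.21) gives
\be
 N_n \dto N := \sum_k \delta_{(T_k,\,J_k)}
\ee
in $\mathbf{M}_p([0,1]\times\EE)$, where $N$ is a Poisson random measure with mean measure $\mathrm{Leb}|_{[0,1]}\otimes\mu$ and $\mu$ is the measure in \eqref{eq:mu}. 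Since this intensity places no mass on $\{0,1\}\times\EE$ or on $[0,1]\times\{\pm\infty\}$, we have $N\in\Lambda$ almost surely, and Proposition~\ref{p:contfunct} together with the continuous mapping theorem yields $\Phi(N_n)\dto\Phi(N)$ in $D_\uparrow([0,1],\RR^2)$ with the weak $M_1$ topology.

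Next I would identify both sides. Directly from the definition of $\Phi$, $\Phi(N_n)(t)=(V_n^+(t),V_n^-(t))$, where $V_n^{\pm}(t)$ denotes the obvious maximum of $|Z_i|/a_n$ over $i\leq\floor{nt}$ with $Z_i$ of the appropriate sign. For the limit, the restrictions of $N$ to $[0,1]\times(0,\infty)$ and $[0,1]\times(-\infty,0)$ are independent Poisson random measures (standard thinning), so $\Phi_1(N)$ and $\Phi_2(N)$ are independent extremal processes with exponent measures $p\alpha x^{-\alpha-1}1_{(0,\infty)}(x)\,\dd x$ and $r\alpha x^{-\alpha-1}1_{(0,\infty)}(x)\,\dd x$ respectively, which I identify with $W^{(1)}$ and $W^{(2)}$.

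Because $(Z_i)$ and $(C_j)$ are independent and $(C_+,C_-)$ does not depend on $n$, the joint convergence
\be
 \bigl((V_n^+,V_n^-),\,(C_+,C_-)\bigr) \dto \bigl((W^{(1)},W^{(2)}),\,(C^{(1)},C^{(2)})\bigr)
\ee
holds in distribution on $D_\uparrow([0,1],\RR^2)\times[0,\infty)^2$ (with the weak $M_1$ topology on the first factor), where $(C^{(1)},C^{(2)})\eind(C_+,C_-)$ is independent of $(W^{(1)},W^{(2)})$. Noting that $W_n(t)=C_+V_n^+(t)\vee C_-V_n^-(t)$, it remains to apply the map $\Psi((x,y),(c,d)):=(cx)\vee(dy)$. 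Viewing a scalar $c\ge 0$ as the constant function $t\mapsto c$, which has no discontinuities, Lemma~\ref{l:contmultpl} yields $c_n x_n\to cx$ and $d_n y_n\to dy$ in standard $M_1$ whenever $(x_n,y_n)\to(x,y)$ coordinatewise in $M_1$ with nonnegative non-decreasing limits and $(c_n,d_n)\to(c,d)$ in $[0,\infty)^2$; Lemma~\ref{l:weakM2transf} then turns the coordinatewise max into convergence in the standard $M_1$ topology. Since $C^{(i)}\ge 0$ a.s.\ and $W^{(i)}$ are nonnegative non-decreasing, the limit lies almost surely in the continuity set of $\Psi$, and one more invocation of the continuous mapping theorem produces \eqref{e:pomkonv}.

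The main obstacle is the bookkeeping at the point-process step: verifying $N\in\Lambda$ a.s., carrying out the Poisson thinning to realize the limit as the advertised pair of independent extremal processes with the correct exponent measures, and confirming that Lemma~\ref{l:contmultpl} really does apply to multiplication by the random scalars $C^{(1)},C^{(2)}$ (as constant functions, so the sign condition on coinciding discontinuities is vacuous). Once these pieces are aligned, passage through the two continuity lemmas is routine.
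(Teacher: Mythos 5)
Your proposal is correct and follows essentially the same route as the paper: point-process convergence of $N_n$, Proposition~\ref{p:contfunct} plus continuous mapping, Poisson thinning to identify $W^{(1)},W^{(2)}$, and then Lemmas~\ref{l:contmultpl} and~\ref{l:weakM2transf} applied after treating $C_+,C_-$ as constant functions. The only difference is one of detail: where you assert the joint convergence with $(C_+,C_-)$ directly from independence, the paper justifies it by checking that $D_{\uparrow}([0,1],\mathbb{R}^2)$ with the weak $M_1$ topology is Polish and invoking Kallenberg's transfer and joint-convergence results, which is the same idea made explicit.
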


\begin{rem}
In Proposition~\ref{p:FLT}, as well as in the sequel of this paper, we suppose $W^{(1)}$ is an extremal process if $p>0$, and a zero process if $p=0$. Analogously for $W^{(2)}$.
\end{rem}

\begin{proof}[Proof of Proposition~\ref{p:FLT}]
Since the random variables $Z_{i}$ are i.i.d.~and regularly varying, Corollary 6.1 in Resnick~\cite{Resnick07} yields
\begin{equation}\label{e:ppconv}
 N_{n} := \sum_{i=1}^{n}\delta_{(\frac{i}{n}, \frac{Z_{i}}{a_{n}})} \dto N := \sum_{i}\delta_{(t_{i},j_{i})} \qquad \textrm{as} \ n \to \infty,
\end{equation}
in $\mathbf{M}_{p}([0,1] \times \EE)$, where the limiting point process $N$ is a Poisson process with intensity measure $\emph{Leb} \times \mu$, with $\mu$ as in (\ref{eq:mu}). Since $P( N \in \Lambda)=1$ (cf.~Resnick~\cite{Resnick}, p.~221) from (\ref{e:ppconv}) by an application of Proposition~\ref{p:contfunct} and the continuous mapping theorem (see for instance Theorem 3.1 in Resnick~\cite{Resnick07}) we obtain
$ \Phi(N_{n})(\,\cdot\,) \dto \Phi(N)(\,\cdot\,)$ as $n \to \infty$, i.e.
\begin{eqnarray}\label{e:conv11}
 \nonumber (W_{n}^{(1)}(\,\cdot\,), W_{n}^{(2)}(\,\cdot\,)) & := & \Big( \bigvee_{i=1}^{\floor{n\,\cdot}} \frac{|Z_{i}|}{a_{n}} 1_{ \{ Z_{i} > 0 \} }, \bigvee_{i=1}^{\floor{n\,\cdot}}\frac{|Z_{i}|}{a_{n}} 1_{\{ Z_{i}<0 \}}  \Big)\\[0.6em]
  & \hspace*{-10em} \dto & \hspace*{-5em} (W^{(1)}(\,\cdot\,), W^{(2)}(\,\cdot\,)) :=
 \Big(  \bigvee_{t_{i} \leq\,\cdot} |j_{i}| 1_{\{ j_{i} >0 \}}, \bigvee_{t_{i} \leq\,\cdot} |j_{i}| 1_{\{ j_{i}<0 \}} \Big)
\end{eqnarray}
in $D_{\uparrow}([0,1], \mathbb{R}^{2})$ under the weak $M_{1}$ topology.

The space $D([0,1],\mathbb{R})$ equipped with the Skorokhod $J_{1}$ topology is a Polish space (i.e.~metrizable as a complete separable metric space), see Section 14 in Billingsley~\cite{Bi68}, and therefore the same holds for the (standard) $M_{1}$ topology, since it is topologically complete (see Section 12.8 in Whitt~\cite{Whitt02}) and separability remains preserved in the weaker topology. The space $D_{\uparrow}[0,1]$ is a closed subspace of $D([0,1], \mathbb{R})$ (cf.~Lemma 13.2.3 in Whitt~\cite{Whitt02}), and hence also Polish. Further, the space $D_{\uparrow}([0,1], \mathbb{R}^{2})$ equipped with the weak $M_{1}$ topology is separable as a direct product of two separable topological spaces. It is also topologically complete since the product metric in (\ref{e:defdp}) inherits the completeness of the component metrics. Thus we conclude that $D_{\uparrow}([0,1], \mathbb{R}^{2})$ with the weak $M_{1}$ topology is also a Polish space,
and hence by Corollary 5.18 in Kallenberg~\cite{Ka97}, we can find a random vector ($C^{(1)}, C^{(2)})$, independent of $(W^{(1)}, W^{(2)})$, such that
\begin{equation}\label{e:eqdistrC}
(C^{(1)}, C^{(2)}) \eind (C_{+}, C_{-}).
\end{equation}
This, relation (\ref{e:conv11}) and the fact that $(C_{+}, C_{-})$ is independent of $(W_{n}^{(1)}, W_{n}^{(2)})$, by an application of Theorem 3.29 in Kallenberg~\cite{Ka97}, imply that, as $n \to \infty$,
  \begin{equation}\label{e:zajedkonvK}
   (B^{1}(\,\cdot\,), B^{2}(\,\cdot\,), W_{n}^{1}(\,\cdot\,), W_{n}^{2}(\,\cdot\,)) \dto (B^{(1)}(\,\cdot\,), B^{(2)}(\,\cdot\,), W^{(1)}(\,\cdot\,), W^{(2)}(\,\cdot\,))
  \end{equation}
  in $D_{\uparrow}([0,1], \mathbb{R}^{4})$ with the product $M_{1}$ topology, where $B^{1}(t)=C_{+}$, $B^{2}(t)=C_{-}$, $B^{(1)}(t)=C^{(1)}$ and $B^{(2)}(t)=C^{(2)}$ for $t \in [0,1]$.

Let $g \colon D_{\uparrow}([0,1], \mathbb{R}^{4}) \to D_{\uparrow}([0,1], \mathbb{R}^{2})$ be a function defined by
$$ g(x) = (x_{1}x_{3}, x_{2}x_{4}), \qquad x=(x_{1},x_{2}, x_{3}, x_{4}) \in D_{\uparrow}([0,1], \mathbb{R}^{4}).$$
 Denote by $\widetilde{D}_{1,2}$ the set of all functions in $D_{\uparrow}([0,1], \mathbb{R}^{4})$ for which the first two component functions have no common discontinuity points, i.e.
$$ \widetilde{D}_{1,2} = \{ (u,v,z,w) \in D_{\uparrow}([0,1], \mathbb{R}^{4}) : \textrm{Disc}(u) = \textrm{Disc}(v)= \emptyset \}.$$
By Lemma~\ref{l:contmultpl} the function $g$ is continuous on the set $\widetilde{D}_{1,2}$ in the weak $M_{1}$ topology, and hence $\textrm{Disc}(g) \subseteq \widetilde{D}_{1,2}^{c}$. Denoting
$\widetilde{D}_{1} = \{ u \in D_{\uparrow}[0,1] : \textrm{Disc}(u)= \emptyset \}$ we obtain
\begin{eqnarray*}
\Pr[ (B^{(1)}, B^{(2)}, W^{(1)}, W^{(2)}) \in \textrm{Disc}(g) ] & \leq & \Pr [ (B^{(1)}, B^{(2)}, W^{(1)}, W^{(2)})  \in \widetilde{D}_{1,2}^{c}]\\[0.5em]
 &  \leq &  \Pr [\{ B^{(1)} \in \widetilde{D}_{1}^{c} \} \cup \{ B^{(2)} \in \widetilde{D}_{1}^{c} \}]=0,
 \end{eqnarray*}
where the last equality holds since $B^{(1)}$ and $B^{(2)}$ have no discontinuity points. This allows us to apply the continuous mapping theorem to relation (\ref{e:zajedkonvK}) yielding
$g(B^{1}, B^{2}, W_{n}^{1}, W_{n}^{2}) \dto g(B^{(1)}, B^{(2)}, W^{(1)}, W^{(2)})$, i.e.
\begin{equation}\label{e:zajedkonvK2}
 (C_{+} W_{n}^{1}, C_{-} W_{n}^{2}) \dto (C^{(1)} W^{(1)}, C^{(2)} W^{(2)}) \qquad \textrm{as} \ n \to \infty,
\end{equation}
 in $D_{\uparrow}([0,1], \mathbb{R}^{2})$ with the weak $M_{1}$ topology. Now from (\ref{e:zajedkonvK2}) by Lemma~\ref{l:weakM2transf} and the continuous mapping theorem it follows
 $ C_{+} W_{n}^{1} \vee C_{-} W_{n}^{2} \dto C^{(1)} W^{(1)} \vee C^{(2)} W^{(2)}$ as $n \to \infty$, i.e.
 $$ \bigvee_{i=1}^{\floor {n\,\cdot}}\frac{ C_{+}|Z_{i}|}{a_{n}} 1_{\{ Z_{i} >0 \}} \vee  \bigvee_{i=1}^{\floor {n\,\cdot}}\frac{ C_{-}|Z_{i}|}{a_{n}} 1_{\{ Z_{i}<0 \}} \dto
  \bigvee_{t_{i} \leq\,\cdot} C^{(1)}|j_{i}| 1_{\{ j_{i} >0 \}} \vee \bigvee_{t_{i} \leq\,\cdot} C^{(2)}|j_{i}| 1_{\{ j_{i}<0 \}}$$
  in $D_{\uparrow}[0,1]$ with the $M_{1}$ topology. This is in fact (\ref{e:pomkonv}) since the process in the converging sequence in the last relation is equal to $W_{n}(\,\cdot\,)$.
It remains only to show that the corresponding limiting process is of the form claimed in the statement of the proposition. Denote it by $M(\,\cdot\,)$.  By an application of Proposition 3.7 in Resnick~\cite{Re87} we obtain that the restricted processes $\sum_{i}\delta_{(t_{i}, j_{i} 1_{\{j_{i} >0 \}})}$ and $\sum_{i}\delta_{(t_{i}, -j_{i} 1_{\{j_{i} <0 \}})}$ are independent Poisson processes with intensity measures $\emph{Leb} \times \mu_{+}$ and $\emph{Leb} \times \mu_{-}$ respectively, where
$$  \mu_{+}(\rmd x) = p \, 1_{(0, \infty)}(x) \, \alpha x^{-\alpha-1} \, \rmd x
\qquad \textrm{and} \qquad  \mu_{-}(\rmd x) =  r \, 1_{(0, \infty)}(x) \, \alpha x^{-\alpha-1} \, \rmd x.$$
(cf.~Theorem 5.2 in Last and Penrose~\cite{LaPe18}). From this we conclude that the processes
$$ W^{(1)}(\,\cdot\,) =  \bigvee_{t_{i} \leq\,\cdot} |j_{i}| 1_{\{ j_{i} >0 \}} \qquad \textrm{and} \qquad W^{(2)}(\,\cdot\,)=\bigvee_{t_{i} \leq\,\cdot} |j_{i}| 1_{\{ j_{i}<0 \}}$$
are extremal processes with exponent measures $\mu_{+}$ and $\mu_{-}$ respectively (see Resnick~\cite{Re87},  Section 4.3; Resnick~\cite{Resnick07}, p.~161), and hence
$M(t)=  C^{(1)}W^{(1)}(t) \vee C^{(2)}W^{(2)}(t)$ for $ t \in [0,1]$.
\end{proof}

In deriving functional convergence of the partial maxima process we first deal with finite order linear processes. Fix $q \in \mathbb{N}$ and let
$$ X_{t} = \sum_{i=0}^{q}C_{i}Z_{t-i}, \qquad t \in \mathbb{Z}.$$
In this case $C_{+}$ and $C_{-}$ reduce to
$C_{+}= \max \{ C_{j} \vee 0 : j=0, \ldots, q\}$ and $C_{-}= \max \{ -C_{j} \vee 0 : j=0, \ldots, q \}$. Denote by $M$ the limiting process in Proposition~\ref{p:FLT}, i.e.
\begin{equation}\label{e:limprocess}
 M(\,\cdot\,) = C^{(1)}W^{(1)}(\,\cdot\,) \vee C^{(2)}W^{(2)}(\,\cdot\,),
\end{equation}
where $W^{(1)}$ is an extremal process with exponent measure $\mu_{+}(\rmd x) = p \alpha x^{-\alpha-1} \rmd x$ for $x>0$, $W^{(2)}$ is an extremal process with exponent measure $\mu_{-}(\rmd x) = r \alpha x^{-\alpha-1} \rmd x$ for $x>0$, and $(C^{(1)}, C^{(2)})$ is a two dimensional random vector, independent of $(W^{(1)}, W^{(2)})$, such that
$(C^{(1)}, C^{(2)}) \eind (C_{+}, C_{-})$. Taking into account the proof of Proposition~\ref{p:FLT} observe that
  $$ M(t) = \bigvee_{t_{i} \leq t}|j_{i}| ( C^{(1)}1_{\{ j_{i} >0 \}} + C^{(2)} 1_{\{ j_{i}<0 \}}), \qquad t \in [0,1],$$
  where $\sum_{i}\delta_{(t_{i},j_{i})}$ is a Poisson process with intensity measure $\emph{Leb} \times \mu$, with $\mu$ as in (\ref{eq:mu}).

\begin{thm}\label{t:FLT}
Let $(Z_{i})_{i \in \mathbb{Z}}$ be an i.i.d.~sequence of random variables satisfying $(\ref{e:regvar})$ and $(\ref{e:pr})$ with $\alpha >0$.
Assume $C_{0}, C_{1}, \ldots, C_{q}$ are random variables independent of $(Z_{i})$. Then, as $n \to \infty$,
$$ M_{n}(\,\cdot\,) \dto  M(\,\cdot\,)$$
in $D_{\uparrow}[0,1]$ endowed with the $M_{1}$ topology.
\end{thm}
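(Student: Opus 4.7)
The idea is to deduce the theorem from Proposition~\ref{p:FLT} by showing that $d_{M_1}(M_n, W_n) \stp 0$; together with $W_n \dto M$ this forces $M_n \dto M$ on the Polish space $(D_\uparrow[0,1], d_{M_1})$ via a standard Slutsky-type argument. The comparison $M_n \approx W_n$ proceeds through truncation and a cluster decoupling of the innovations.

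\textbf{Truncation.} Fix $\eps > 0$, set $Z_i^{(\eps)} = Z_i \mathbf{1}_{\{|Z_i| > \eps a_n\}}$, $X_i^{(\eps)} = \sum_{j=0}^q C_j Z_{i-j}^{(\eps)}$, and let $M_n^{(\eps)}$ and $W_n^{(\eps)}$ be defined by the same formulas as $M_n$ and $W_n$ but with $Z_i$ replaced by $Z_i^{(\eps)}$. The elementary inequality $|\bigvee_i a_i - \bigvee_i b_i| \leq \bigvee_i |a_i - b_i|$ combined with $|Z_i - Z_i^{(\eps)}| \leq \eps a_n$ gives the a.s.\ uniform estimates
$$ \sup_{t \in [0,1]} |M_n(t) - M_n^{(\eps)}(t)| \leq \eps \sum_{j=0}^q |C_j|, \qquad \sup_{t \in [0,1]} |W_n(t) - W_n^{(\eps)}(t)| \leq \eps (C_+ \vee C_-).$$
Since these bounds dominate the corresponding $M_1$-distances and the right-hand sides are a.s.\ finite, both truncation errors vanish in probability as $\eps \to 0$, uniformly in $n$.

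\textbf{Cluster comparison on a good event.} Let $A_n^\eps$ be the event that (i) $|Z_k| \leq \eps a_n$ for every $k \in \{1-q, \ldots, 0\}$ and (ii) no two distinct $k_1, k_2 \in \{1,\ldots,n\}$ with $|Z_{k_i}| > \eps a_n$ satisfy $|k_1 - k_2| \leq q$. A union bound together with $n \Pr(|Z_1| > \eps a_n) \to \eps^{-\alpha}$ yields $\Pr(A_n^\eps) \to 1$. On $A_n^\eps$, every $X_i^{(\eps)}$ contains at most one nonzero summand, so for each $k \in \{1,\ldots,n\}$ with $|Z_k| > \eps a_n$ one has $X_{k+j}^{(\eps)} = C_j Z_k$ for $j = 0, \ldots, q$ and all other $X_i^{(\eps)} = 0$. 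A brief sign-based case analysis yields
$$ \bigvee_{j=0}^q C_j Z_k \vee 0 = |Z_k|\bigl(C_+ \mathbf{1}_{\{Z_k>0\}} + C_- \mathbf{1}_{\{Z_k<0\}}\bigr),$$
which is exactly the jump of $W_n^{(\eps)}$ at index $k$. Consequently, on $A_n^\eps$, each cluster contributes its full increment to $W_n^{(\eps)}$ instantaneously at time $k/n$, while $M_n^{(\eps)}$ accumulates the same total gradually across $[k/n, (k+q)/n]$, which gives the bracketing
$$ M_n^{(\eps)}(t) \leq W_n^{(\eps)}(t) \leq M_n^{(\eps)}(t + q/n), \qquad t \in [0, 1 - q/n].$$
Since both processes are non-decreasing, this bracketing is realised by a common strong parametric representation of the completed graphs with $\|r_1 - r_2\|_{[0,1]} \leq q/n$ and $\|u_1 - u_2\|_{[0,1]} = 0$, hence $d_{M_1}(M_n^{(\eps)}, W_n^{(\eps)}) \leq q/n \to 0$ on $A_n^\eps$.

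\textbf{Conclusion and main obstacle.} Given $\delta > 0$, choose $\eps$ small enough that each truncation error from the first step is below $\delta/3$ with probability at least $1-\delta$ (using a.s.\ finiteness of $\sum_{j} |C_j|$ and $C_\pm$). The cluster step applied to this fixed $\eps$ then gives $d_{M_1}(M_n^{(\eps)}, W_n^{(\eps)}) < \delta/3$ with probability tending to $1$. The triangle inequality yields $d_{M_1}(M_n, W_n) \stp 0$, and the conclusion $M_n \dto M$ follows from Proposition~\ref{p:FLT}. The main obstacle is the cluster-comparison step: one must carefully verify (i) the sign-based reduction showing that the random coefficients $(C_j)$ enter the limit only through the scalar summaries $(C_+, C_-)$, and (ii) that the inevitable time-lag of order $q/n$ between the step-wise growth of $M_n^{(\eps)}$ across a cluster and the instantaneous jump of $W_n^{(\eps)}$ contributes only a vanishing $M_1$-distance, for which the monotonicity of both processes is essential.
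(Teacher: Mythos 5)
Your proposal is correct in its essentials but follows a genuinely different route from the paper. The paper does not truncate: it works directly with $d_{M_{2}}(W_{n},M_{n})$, shows that a discrepancy larger than $\delta$ at some graph point forces either a large innovation near the boundary, or two large innovations within lag $q$ of each other, or a more elaborate three-exceedance configuration (the events $H_{n,1},H_{n,2},H_{n,3}$), and kills each of these by union bounds and regular variation; the passage from $d_{M_{2}}$ to $d_{M_{1}}$ uses the complete metric $d_{M_{1}}^{*}=d_{M_{2}}+\lambda(\widehat{\omega}(\cdot),\widehat{\omega}(\cdot))$ together with the vanishing of the oscillation functional for monotone paths. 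Your truncation at level $\eps a_{n}$ plus the isolated-exceedance event $A_{n}^{\eps}$ replaces the paper's case analysis by a transparent structural statement (each truncated $X_{i}^{(\eps)}$ has at most one nonzero summand, so $M_{n}^{(\eps)}$ and $W_{n}^{(\eps)}$ differ only by a time shift of $q/n$), at the cost of an extra $\eps\to 0$ limit controlled by the a.s.\ finiteness of $\sum_{j=0}^{q}|C_{j}|$ and $C_{+}\vee C_{-}$. Both arguments ultimately rest on the same fact, namely that exceedances of $\eps a_{n}$ are asymptotically separated by more than $q$; yours is shorter and cleaner to verify.

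Two points need patching. First, your event $A_{n}^{\eps}$ excludes large innovations only at the left boundary $\{1-q,\ldots,0\}$; you must also exclude them from $\{n-q+1,\ldots,n\}$ (as the paper's $H_{n,1}$ does). Otherwise a cluster straddling the right endpoint gives $W_{n}^{(\eps)}(1)>M_{n}^{(\eps)}(1)$, the two completed graphs have different terminal values, no common spatial parametrization exists, and in fact $d_{M_{2}}(M_{n}^{(\eps)},W_{n}^{(\eps)})\geq W_{n}^{(\eps)}(1)-M_{n}^{(\eps)}(1)$, which need not be small. Second, the claim that the bracketing $M_{n}^{(\eps)}(t)\leq W_{n}^{(\eps)}(t)\leq M_{n}^{(\eps)}(t+q/n)$ is ``realised by a common strong parametric representation with $\|u_{1}-u_{2}\|=0$'' is not quite how one should conclude; the clean route is to note that the bracketing (for monotone c\`adl\`ag paths with matching endpoint values) gives $d_{M_{2}}(M_{n}^{(\eps)},W_{n}^{(\eps)})\leq q/n$ by the Hausdorff characterization, and then invoke the same $d_{M_{1}}^{*}=d_{M_{2}}$ reduction for non-decreasing functions that the paper uses. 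A similar vanishing-probability caveat applies to the degenerate case where index $1$ itself is a large exceedance and all $C_{j}Z_{1}<0$, so that $M_{n}^{(\eps)}$ is briefly negative while $W_{n}^{(\eps)}=0$; adding $|Z_{1}|\leq\eps a_{n}$ to $A_{n}^{\eps}$ disposes of it. With these adjustments the argument is complete.
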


\begin{proof}
Our aim is to show  that for every $\delta >0$
$$ \lim_{n \to \infty} \Pr[d_{M_{1}}(W_{n}, M_{n}) > \delta]=0,$$
since then from Proposition~\ref{p:FLT} by an application of Slutsky's theorem (see for instance Theorem 3.4 in Resnick~\cite{Resnick07}) we will obtain $M_{n}(\,\cdot\,) \dto M(\,\cdot\,)$ as $n \to \infty$
in $D_{\uparrow}[0,1]$ endowed with the $M_{1}$ topology. It suffices to show that
\begin{equation}\label{e:max1}
\lim_{n \to \infty} \Pr[d_{M_{2}}(W_{n}, M_{n}) > \delta]=0.
\end{equation}
Indeed, by Remark 12.8.1 in Whitt~\cite{Whitt02} the following metric is a complete metric topologically equivalent to $d_{M_{1}}$:
$$ {d_{M_{1}}^{*}}(x_{1}, x_{2}) = d_{M_{2}}(x_{1}, x_{2}) + \lambda (\widehat{\omega}(x_{1},\cdot), \widehat{\omega}(x_{2},\cdot)),$$
where $\lambda$ is the L\'{e}vy metric on a space of distributions
$$ \lambda (F_{1},F_{2}) = \inf \{ \epsilon >0 : F_{2}(x-\epsilon) - \epsilon \leq F_{1}(x) \leq F_{2}(x+\epsilon) + \epsilon \ \ \textrm{for all} \ x \}$$
and
$$ \widehat{\omega}(x,z) = \left\{ \begin{array}{cc}
                                   \omega(x,e^{z}), & \quad z<0,\\[0.4em]
                                   \omega(x,1), & \quad z \geq 0,
                                 \end{array}\right.$$
with
$ \omega (x,\rho) = \sup_{0 \leq t \leq 1} \omega (x, t, \rho)$ and
\begin{equation*}\label{e:oscillationf}
\omega(x,t,\rho) = \sup_{0 \vee (t-\rho) \leq t_{1} < t_{2} < t_{3} \leq (t+\rho) \wedge 1} ||x(t_{2}) - [x(t_{1}), x(t_{3})]||
\end{equation*}
where $\rho>0$ and $\|z-A\|$ denotes the distance between a point $z$ and a subset $A \subseteq \mathbb{R}$.
%Here $\|z-A\|$ denotes the distance between a point $z$ and a subset $A \subseteq \mathbb{R}$.
Since $W_{n}(\,\cdot\,)$ and $M_{n}(\,\cdot\,)$ are nondecreasing, for $t_{1} < t_{2} < t_{3}$ it holds that
$ \|W_{n}(t_{2}) - [W_{n}(t_{1}), W_{n}(t_{3})] \|=0$, which implies $\omega(W_{n}, \rho)=0$ for all $\rho>0$, and similarly $\omega(M_{n}, \rho)=0$. Hence $\lambda (W_{n}, M_{n})=0$, and $d_{M_{1}}^{*}(W_{n}, M_{n}) = d_{M_{2}}(W_{n}, M_{n})$.

In order to show (\ref{e:max1}) fix $\delta >0$ and let $n \in \mathbb{N}$ be large enough, i.e.~$n > \max\{2q, 2q/\delta \}$.
Then by the definition of the metric $d_{M_{2}}$ we have
\begin{equation*}
% \nonumber to remove numbering (before each equation)
  d_{M_{2}}(W_{n}, M_{n}) = \bigg(\sup_{v \in \Gamma_{W_{n}}} \inf_{z \in \Gamma_{ M_{n}}} d(v,z) \bigg) \vee \bigg(\sup_{v \in \Gamma_{ M_{n}}} \inf_{z \in \Gamma_{W_{n}}} d(v,z) \bigg)  = : Y_{n} \vee T_{n}.
\end{equation*}
Hence
\be\label{eq:AB}
\Pr [d_{M_{2}}(W_{n}, M_{n})> \delta ] \leq \Pr(Y_{n}>\delta) + \Pr(T_{n}>\delta).
\ee
Now, we estimate the first term on the right hand side of (\ref{eq:AB}).
Let
$$ D_{n} = \{\exists\,v \in \Gamma_{W_{n}} \ \textrm{such that} \ d(v,z) > \delta \ \textrm{for every} \ z \in \Gamma_{ M_{n}} \},$$
and note that by the definition of $Y_{n}$
\begin{equation}\label{e:Yn}
 \{Y_{n} > \delta\} \subseteq D_{n}.
\end{equation}
On the event $D_{n}$ it holds that $d(v, \Gamma_{M_{n}})> \delta$. Let $v=(t_{v},x_{v})$. We claim that
\begin{equation}\label{e:i*}
 \Big| W_{n} \Big( \frac{i^{*}}{n} \Big) - M_{n} \Big( \frac{i^{*}}{n} \Big) \Big| > \delta,
\end{equation}
where $i^{*}=\floor{nt_{v}}$ or $i^{*}=\floor{nt_{v}}-1$. To see this, observe that $t_{v} \in [i/n, (i+1)/n)$ for some $i \in \{1,\ldots,n-1\}$ (or $t_{v}=1$). If $x_{v} = W_{n}(i/n)$ (i.e.~$v$ lies on a horizontal part of the completed graph), then clearly
$$\Big| W_{n} \Big( \frac{i}{n} \Big) -  M_{n} \Big( \frac{i}{n} \Big) \Big| \geq d(v,  \Gamma_{ M_{n}}) > \delta,$$
and we put $i^{*}=i$.
 On the other hand, if $x_{v} \in [W_{n}((i-1)/n), W_{n}(i/n))$ (i.e.~$v$ lies on a vertical part of the completed graph), one can similarly show that
 $$ \Big| W_{n} \Big(\frac{i-1}{n} \Big) -  M_{n} \Big(\frac{i-1}{n} \Big) \Big| > \delta \qquad \textrm{if} \ M_{n} \Big( \frac{i-1}{n} \Big) > x_{v},$$
 and
 $$ \Big| W_{n} \Big(\frac{i}{n} \Big) -  M_{n} \Big(\frac{i}{n} \Big) \Big| > \delta \qquad \textrm{if} \ M_{n} \Big( \frac{i-1}{n} \Big) < x_{v}.$$
In the first case put $i^{*}=i-1$ and in the second $i^{*}=i$.
Since $i= \floor{nt_{v}}$ we conclude that (\ref{e:i*}) holds.
Moreover, since $|i^{*}/n -(i^{*}+l)/n| \leq q/n < \delta$ for every $l=1,\ldots,q$ (such that $i^{*}+l \leq n$), from the definition of the set $D_{n}$ one can similarly conclude that
\begin{equation}\label{e:i*q}
 \Big| W_{n} \Big( \frac{i^{*}}{n} \Big) - M_{n} \Big( \frac{i^{*}+l}{n} \Big) \Big| > \delta.
\end{equation}
Let $C = C_{+} \vee C_{-}$.
We claim that
\begin{equation}\label{e:estim1}
 D_{n} \subseteq H_{n, 1} \cup H_{n, 2} \cup H_{n, 3},
\end{equation}
where
\begin{eqnarray}
 %\nonumber H_{n, 1} & = & \bigg\{ \exists\,l \in \{-q, \ldots, 0\} \ \textrm{such that} \ \frac{ C|Z_{l}|}{a_{n}} > \frac{\delta}{4(q+1)} \bigg\},\\[0.4em]
 \nonumber H_{n, 1} & = & \bigg\{ \exists\,l \in \{-q,\ldots,q\} \cup \{n-q+1, \ldots, n\} \ \textrm{such that} \ \frac{ C|Z_{l}|}{a_{n}} > \frac{\delta}{4(q+1)} \bigg\},\\[0.1em]
  \nonumber H_{n, 2} & = & \bigg\{ \exists\,k \in \{1, \ldots, n\} \ \textrm{and} \ \exists\,l \in \{k-q,\ldots,k+q\} \setminus \{k\} \ \textrm{such that}\\[0.1em]
  \nonumber & & \ \frac{ C|Z_{k}|}{a_{n}} > \frac{\delta}{4(q+1)} \ \textrm{and} \  \frac{ C|Z_{l}|}{a_{n}} > \frac{\delta}{4(q+1)}  \bigg\},\\[0.1em]
  \nonumber H_{n, 3} & = & \bigg\{ \exists\,k \in \{1, \ldots, n\}, \ \exists\,j \in \{1,\ldots,n\} \setminus \{k,\ldots,k+q\}, \ \exists\,l_{1} \in \{0,\ldots,q\}\\[0.1em]
  \nonumber & & \textrm{and} \ \exists\,l \in \{0,\ldots,q\} \setminus \{l_{1}\} \ \textrm{such that} \ \frac{ C|Z_{k}|}{a_{n}} > \frac{\delta}{4(q+1)},\\[0.1em]
  \nonumber & & \ \frac{ C|Z_{j-l_{1}}|}{a_{n}} > \frac{\delta}{4(q+1)} \ \textrm{and} \  \frac{ C|Z_{j-l}|}{a_{n}} > \frac{\delta}{4(q+1)}  \bigg\}.
\end{eqnarray}
Relation (\ref{e:estim1}) will be proven if we show that
$$ \widehat{D}_{n} := D_{n} \cap (H_{n, 1} \cup H_{n, 2})^{c} \subseteq H_{n, 3}.$$
Assume the event $\widehat{D}_{n}$ occurs. Then necessarily $W_{n}(i^{*}/n) > \delta / [4(q+1)]$. Indeed, if $W_{n}(i^{*}/n) \leq \delta / [4(q+1)]$, i.e.
$$ \bigvee_{j=1}^{i^{*}}\frac{ |Z_{j}|}{a_{n}} \big(C_{+} 1_{\{ Z_{j}>0 \}} + C_{-} 1_{\{ Z_{j}<0 \}} \big)   = W_{n} \Big( \frac{i^{*}}{n} \Big) \leq \frac{\delta}{4(q+1)},$$
then for every $s \in \{ q+1, \ldots, i^{*}\}$ it holds that
%\begin{eqnarray}\label{e:phipm}
% \nonumber to remove numbering (before each equation)
  % \nonumber \frac{X_{s}}{a_{n}} & = & \sum_{j=0}^{q} \frac{C_{j} Z_{s-j}}{a_{n}} \leq  \sum_{j=0}^{q}  \frac{|Z_{s-j}|}{a_{n}} \big(C_{+} 1_{\{ Z_{s-j}>0 \}} + C_{-} 1_{\{ Z_{s-j}<0 \}} \big)\\[0.5em]
 % & \leq & \frac{\delta}{4(q+1) }\,(q+1) = \frac{\delta}{4}.
%\end{eqnarray}
\begin{eqnarray}\label{e:phipm}
   \nonumber \frac{X_{s}}{a_{n}}  &=&  \sum_{j=0}^{q} \frac{C_{j} Z_{s-j}}{a_{n}} \leq  \sum_{j=0}^{q}  \frac{|Z_{s-j}|}{a_{n}} \big(C_{+} 1_{\{ Z_{s-j}>0 \}} + C_{-} 1_{\{ Z_{s-j}<0 \}} \big)\\[0.4em]
    &\leq& \frac{\delta}{4(q+1) }\,(q+1) = \frac{\delta}{4}.
\end{eqnarray}
Since the event $H_{n, 1}^{c}$ occurs, for every $s \in \{1, \ldots, q\}$ we also have
\begin{equation}\label{e:phipm2}
  \frac{|X_{s}|}{a_{n}}  \leq  \sum_{j=0}^{q}|C_{j}| \frac{|Z_{s-j}|}{a_{n}} \leq \sum_{j=0}^{q}  \frac{C |Z_{s-j}|}{a_{n}} \leq (q+1) \frac{\delta}{4(q+1)} = \frac{\delta}{4}.
\end{equation}
Combining (\ref{e:phipm}) and (\ref{e:phipm2}) we obtain
\begin{equation}\label{e:phipm3}
 -\frac{\delta}{4} \leq \frac{X_{1}}{a_{n}} \leq M_{n} \Big( \frac{i^{*}}{n} \Big) = \bigvee_{s=1}^{i^{*}}\frac{X_{s}}{a_{n}}  \leq \frac{\delta}{4},
 \end{equation}
and hence
$$ \Big| W_{n} \Big( \frac{i^{*}}{n} \Big) - M_{n} \Big( \frac{i^{*}}{n} \Big) \Big| \leq  \Big| W_{n} \Big( \frac{i^{*}}{n} \Big) \Big| + \Big| M_{n} \Big( \frac{i^{*}}{n} \Big)\Big| \leq \frac{\delta}{4(q+1)} + \frac{\delta}{4} \leq \frac{\delta}{2},$$
which is in contradiction with (\ref{e:i*}).

Therefore $W_{n}(i^{*}/n) > \delta / [4(q+1)]$.
This implies the existence of some $k \in \{1,\ldots,i^{*}\}$ such that
\begin{equation}\label{e:est31}
 W_{n} \Big( \frac{i^{*}}{n} \Big) = \frac{ |Z_{k}|}{a_{n}} \big(C_{+} 1_{\{ Z_{k}>0 \}} + C_{-} 1_{\{ Z_{k}<0 \}} \big) > \frac{\delta}{4(q+1)},
\end{equation}
and hence
$$ \frac{C|Z_{k}|}{a_{n}} >  \frac{\delta}{4(q+1)}.$$
From this, since $H_{n, 1}^{c}$ occurs, it follows that $q+1 \leq k \leq n-q$. Since $H_{n, 2}^{c}$ occurs, it holds that
\begin{equation}\label{e:est51}
\frac{ C|Z_{l}|}{a_{n}}  \leq \frac{\delta}{4(q+1)} \qquad \textrm{for all} \ l \in \{k-q,\ldots,k+q\} \setminus \{k\}.
\end{equation}

Now we want to show that $M_{n}(i^{*}/n) = X_{j}/a_{n}$ for some $j \in \{1,\ldots,i^{*}\} \setminus \{k,\ldots,k+q\}$. If this is not the case, then $M_{n}(i^{*}/n) = X_{j}/a_{n}$ for some $j \in \{k,\ldots,k+q\}$ (with $j \leq i^{*}$). On the event $\{ Z_{k}>0 \}$ it holds that
 $$ |Z_{k}| \big(C_{+} 1_{\{ Z_{k}>0 \}} + C_{-} 1_{\{ Z_{k}<0 \}} \big) = C_{+}Z_{k} = C_{j_{0}}Z_{k}$$
 for some $j_{0} \in \{0,\ldots, q\}$ (with $C_{j_{0}} \geq 0$).
Here we distinguish two cases:
\begin{itemize}
\item[(i)] $k+q \leq i^{*}$.
Since $k+j_{0} \leq i^{*}$, we have
\begin{equation}\label{e:est6}
\frac{X_{j}}{a_{n}} = M_{n}  \Big( \frac{i^{*}}{n} \Big) \geq \frac{X_{k+j_{0}}}{a_{n}}.
\end{equation}
Observe that we can write
$$ \frac{X_{j}}{a_{n}} = \frac{C_{j-k}Z_{k}}{a_{n}} + \sum_{\scriptsize \begin{array}{c}
                          s=0  \\[-0.1em]
                          s \neq j-k
                        \end{array}}^{q} \frac{C_{s}Z_{j-s}}{a_{n}} =: \frac{C_{j-k}Z_{k}}{a_{n}} + F_{1},$$
and
$$ \frac{X_{k+j_{0}}}{a_{n}} = \frac{C_{j_{0}}Z_{k}}{a_{n}} + \sum_{\scriptsize \begin{array}{c}
                          s=0 \\[-0.1em]
                          s \neq j_{0}
                        \end{array}}^{q}\frac{C_{s}Z_{k+j_{0}-s}}{a_{n}} =: \frac{C_{j_{0}}Z_{k}}{a_{n}} + F_{2}.$$
From relation (\ref{e:est51}) (similarly as in (\ref{e:phipm2})) we obtain
$$ |F_{1}| \leq  q \cdot \frac{\delta}{4(q+1)} < \frac{\delta}{4},$$
and similarly $|F_{2}| < \delta/4$. Since
$ C_{j_{0}} - C_{j-k} = C_{+} - C_{j-k} \geq 0$,
 using (\ref{e:est6}) we obtain
$$ 0 \leq  \frac{C_{j_{0}} Z_{k} - C_{j-k}Z_{k}}{a_{n}} \leq F_{1}-F_{2} \leq |F_{1}| + |F_{2}| < \frac{\delta}{2}.$$
By (\ref{e:i*}) we have
$$ \Big| \frac{C_{j_{0}} Z_{k}}{a_{n}} - \frac{X_{j}}{a_{n}} \Big| = \Big| W_{n} \Big( \frac{i^{*}}{n} \Big) - M_{n} \Big( \frac{i^{*}}{n} \Big) \Big| > \delta,$$
and hence
$$ \delta < \Big| \frac{C_{j_{0}} Z_{k}}{a_{n}} -  \frac{C_{j-k}Z_{k}}{a_{n}} - F_{1} \Big| \leq \Big| \frac{C_{j_{0}} Z_{k}}{a_{n}} -  \frac{C_{j-k}Z_{k}}{a_{n}} \Big| + |F_{1}| < \frac{\delta}{2} + \frac{\delta}{4} = \frac{3\delta}{4},$$
which is not possible.
\item[(ii)] $k+q > i^{*}$. Note that in this case $k \leq j \leq i^{*} < k+q$. Since
 $$ M_{n} \Big( \frac{k+q}{n} \Big) = \bigvee_{s=1}^{k+q}\frac{X_{s}}{a_{n}} \geq M_{n}  \Big( \frac{i^{*}}{n} \Big) = \frac{X_{j}}{a_{n}},$$
 it holds that
 $$ M_{n} \Big( \frac{k+q}{n} \Big) = \frac{X_{p}}{a_{n}},$$
 for some $p \in \{j, \ldots, k+q \} \subseteq \{k, \ldots, k+q\}$.
 Observe that we can write
$$ \frac{X_{p}}{a_{n}} = \frac{C_{p-k}Z_{k}}{a_{n}} + \sum_{\scriptsize \begin{array}{c}
                          s=0  \\[-0.1em]
                          s \neq p-k
                        \end{array}}^{q} \frac{C_{s}Z_{p-s}}{a_{n}} =: \frac{C_{p-k}Z_{k}}{a_{n}} + F_{3},$$
with $|F_{3}| < \delta/4$, which holds by relation (\ref{e:est51}). By relation (\ref{e:i*q}) we have
$$ \Big| \frac{C_{j_{0}} Z_{k}}{a_{n}} - \frac{X_{p}}{a_{n}} \Big| = \Big| W_{n} \Big( \frac{i^{*}}{n} \Big) - M_{n} \Big( \frac{k+q}{n} \Big) \Big| > \delta,$$
and repeating the arguments as in (i), but with
$$ \frac{X_{p}}{a_{n}} = M_{n}  \Big( \frac{k+q}{n} \Big) \geq \frac{X_{k+j_{0}}}{a_{n}}$$
instead of (\ref{e:est6}), we arrive at
$$ \delta < \Big| \frac{C_{j_{0}} Z_{k}}{a_{n}} -  \frac{C_{p-k}Z_{k}}{a_{n}} - F_{3} \Big| \leq \Big| \frac{C_{j_{0}} Z_{k}}{a_{n}} -  \frac{C_{p-k}Z_{k}}{a_{n}} \Big| + |F_{3}| <  \frac{3\delta}{4}.$$
Thus we conclude that this case also can not happen.
\end{itemize}
One can similarly handle the event $\{ Z_{k} <0 \}$ to arrive at a contradiction. Therefore indeed $M_{n}(i^{*}/n) = X_{j}/a_{n}$ for some $j \in \{1,\ldots,i^{*}\} \setminus \{k,\ldots,k+q\}$.
Now we have three cases: (A1) all random variables $Z_{j-q}, \ldots, Z_{j}$ are "small", (A2) exactly one is "large" and (A3) at least two of them are "large", where we say $Z$ is "small" if $ C|Z| /a_{n} \leq \delta/[4(q+1)]$, otherwise it is "large". We will show that the first two cases are not possible.
\begin{itemize}
  \item[(A1)] $ C|Z_{j-l}|/a_{n} \leq \delta/[4(q+1)]$ for every $l=0,\ldots,q$.
 This yields (as in (\ref{e:phipm2}))
$$ \Big| M_{n} \Big( \frac{i^{*}}{n} \Big) \Big| = \frac{|X_{j}|}{a_{n}} \leq  \frac{\delta}{4}.$$
Let $j_{0}$ be as above (on the event $\{ Z_{k}>0 \}$), i.e.
 $$ |Z_{k}| \big(C_{+} 1_{\{ Z_{k}>0 \}} + C_{-} 1_{\{ Z_{k}<0 \}} \big) = C_{+}Z_{k} = C_{j_{0}}Z_{k}.$$
 If $k+q \leq i^{*}$, then
\begin{equation}\label{e:A1case}
 \frac{X_{j}}{a_{n}} \geq \frac{X_{k+j_{0}}}{a_{n}} = \frac{C_{j_{0}}Z_{k}}{a_{n}} + F_{2},
\end{equation}
where $F_{2}$ is as in (i) above, with $|F_{2}| < \delta/4$, i.e.
$$ F_{2} = \frac{X_{k+j_{0}}}{a_{n}} - \frac{C_{j_{0}}Z_{k}}{a_{n}} = \sum_{\scriptsize \begin{array}{c}
                          s=0 \\[-0.1em]
                          s \neq j_{0}
                        \end{array}}^{q}\frac{C_{s}Z_{k+j_{0}-s}}{a_{n}}.$$
 Hence
$$  \frac{C_{j_{0}}Z_{k}}{a_{n}} \leq \frac{X_{j}}{a_{n}} - F_{2} \leq \frac{|X_{j}|}{a_{n}} + |F_{2}| < \frac{\delta}{4} + \frac{\delta}{4} = \frac{\delta}{2},$$
and
$$ \Big| W_{n} \Big( \frac{i^{*}}{n} \Big) - M_{n} \Big( \frac{i^{*}}{n} \Big) \Big| = \Big| \frac{C_{j_{0}} Z_{k}}{a_{n}} - \frac{X_{j}}{a_{n}} \Big| \leq \frac{C_{j_{0}} Z_{k}}{a_{n}} + \frac{|X_{j}|}{a_{n}} < \frac{\delta}{2} + \frac{\delta}{4} = \frac{3\delta}{4},$$
 which is in contradiction with (\ref{e:i*}). On the other hand, if $k+q > i^{*}$, we have two possibilities: $M_{n}((k+q)/n) = M_{n}(i^{*}/n)$ or $M_{n}((k+q)/n) > M_{n}(i^{*}/n)$. When $M_{n}((k+q)/n) = M_{n}(i^{*}/n) = X_{j}/a_{n}$, since $k+j_{0} \leq k+q$ note that relation (\ref{e:A1case}) holds, and similarly as above we obtain
 $$ \Big| W_{n} \Big( \frac{i^{*}}{n} \Big) - M_{n} \Big( \frac{k+q}{n} \Big) \Big| = \Big| \frac{C_{j_{0}} Z_{k}}{a_{n}} - \frac{X_{j}}{a_{n}} \Big| <  \frac{3\delta}{4},$$
  which is in contradiction with (\ref{e:i*q}). Alternatively, when $M_{n}((k+q)/n) > M_{n}(i^{*}/n)$, it holds that  $M_{n}((k+q)/n) = X_{p}/a_{n}$ for some $p \in \{i^{*}, \ldots, k+q\}$. Now in the same manner as in (ii) above we get a contradiction. We handle the event $\{ Z_{k} <0 \}$ similarly to arrive at a contradiction, and therefore this case can not happen.

  \item[(A2)]  There exists $l_{1} \in \{0,\ldots,q\}$ such that $ C|Z_{j-l_{1}}|/a_{n} > \delta/[4(q+1)]$ and  $ C|Z_{j-l}|/a_{n} \leq \delta/[4(q+1)]$ for every $l \in \{0,\ldots,q\} \setminus \{l_{1}\}$. Here we analyze only what happens on the event $\{Z_{k}>0\}$ (the event $\{Z_{k}<0\}$ can be treated analogously and is therefore omitted). Assume first $k+q \leq i^{*}$. Then
\begin{equation}\label{e:est7}
\frac{X_{j}}{a_{n}} \geq \frac{X_{k+j_{0}}}{a_{n}} = \frac{C_{j_{0}}Z_{k}}{a_{n}} + F_{2},
\end{equation}
where $j_{0}$ and $F_{2}$ are as in (i) above, with $|F_{2}| < \delta/4$.
 Write
$$ \frac{X_{j}}{a_{n}} = \frac{C_{l_{1}}Z_{j-l_{1}}}{a_{n}} + \sum_{\scriptsize \begin{array}{c}
                          s=0  \\[-0.1em]
                          s \neq l_{1}
                        \end{array}}^{q} \frac{C_{s}Z_{j-s}}{a_{n}} =: \frac{C_{l_{1}}Z_{j-l_{1}}}{a_{n}} + F_{4}.$$
Similarly as before we obtain $|F_{4}| < \delta/4$. Since $j-l_{1} \leq j \leq i^{*}$, by the definition of the process $W_{n}(\,\cdot\,)$ we have
$$ W_{n} \Big( \frac{i^{*}}{n} \Big) \geq \frac{|Z_{j-l_{1}}|}{a_{n}} \big(C_{+} 1_{\{ Z_{j-l_{1}}>0 \}} + C_{-} 1_{\{ Z_{j-l_{1}}<0 \}} \big) \geq \frac{C_{l_{1}}Z_{j-l_{1}}}{a_{n}}.$$
Thus
$$ \frac{C_{j_{0}} Z_{k}}{a_{n}} = \frac{|Z_{k}|}{a_{n}} \big(C_{+} 1_{\{ Z_{k}>0 \}} + C_{-} 1_{\{ Z_{k}<0 \}} \big) = W_{n} \Big( \frac{i^{*}}{n} \Big) \geq \frac{C_{l_{1}}Z_{j-l_{1}}}{a_{n}}, $$
which yields
\begin{equation}\label{e:est8}
\frac{ C_{j_{0}} Z_{k}}{a_{n}} - \frac{X_{j}}{a_{n}} \geq \frac{C_{l_{1}}Z_{j-l_{1}}}{a_{n}} - \frac{X_{j}}{a_{n}} = - F_{4}.
\end{equation}
Relations (\ref{e:est7}) and (\ref{e:est8}) yield
$$- (|F_{2}| + |F_{4}|) \leq -F_{4} \leq \frac{C_{j_{0}} Z_{k}}{a_{n}} - \frac{X_{j}}{a_{n}} \leq -F_{2} \leq  |F_{2}| + |F_{4}|,$$
i.e.
$$ \Big| W_{n} \Big( \frac{i^{*}}{n} \Big) - M_{n} \Big( \frac{i^{*}}{n} \Big) \Big| = \Big| \frac{C_{j_{0}} Z_{k}}{a_{n}} - \frac{X_{j}}{a_{n}} \Big| \leq |F_{2}| + |F_{4}| < \frac{\delta}{4} + \frac{\delta}{4}=\frac{\delta}{2},$$
which is in contradiction with (\ref{e:i*}).
 Assume now $k+q > i^{*}$. If $M_{n}((k+q)/n) = M_{n}(i^{*}/n) = X_{j}/a_{n}$, relation (\ref{e:est7}) still holds and this leads to
 $$ \Big| W_{n} \Big( \frac{i^{*}}{n} \Big) - M_{n} \Big( \frac{k+q}{n} \Big) \Big| = \Big| \frac{C_{j_{0}} Z_{k}}{a_{n}} - \frac{X_{j}}{a_{n}} \Big| <  \frac{\delta}{2},$$
  which is in contradiction with (\ref{e:i*q}). On the other hand, if $M_{n}((k+q)/n) > M_{n}(i^{*}/n)$, then $M_{n}((k+q)/n) = X_{p}/a_{n}$ for some $p \in \{i^{*}, \ldots, k+q\}$. With the same arguments as in (ii) above
 we obtain $$ \Big| W_{n} \Big( \frac{i^{*}}{n} \Big) - M_{n} \Big( \frac{k+q}{n} \Big) \Big| = \Big| \frac{C_{j_{0}} Z_{k}}{a_{n}} - \frac{X_{p}}{a_{n}} \Big| <  \frac{3\delta}{4},$$
 i.e.~a contradiction with (\ref{e:i*q}). Hence this case also can not happen.

  \item[(A3)] There exist $l_{1} \in \{0,\ldots,q\}$ and $l \in \{0,\ldots,q\} \setminus \{l_{1}\}$ such that $ C|Z_{j-l_{1}}|/a_{n} > \delta/[4(q+1)]$ and $ CZ_{j-l}|/a_{n} > \delta/[4(q+1)]$. In this case the event $H_{n, 3}$ occurs.
\end{itemize}

Therefore only case (A3) is possible, and this yields $\widehat{D}_{n} \subseteq H_{n, 3}$. Hence (\ref{e:estim1}) holds.
By stationarity we have
\begin{equation}\label{e:Hn1new}
 \Pr(H_{n, 1}) \leq (3q+1) \Pr \bigg( \frac{C|Z_{1}|}{a_{n}} > \frac{\delta}{4(q+1)} \bigg).
\end{equation}
For an arbitrary $M>0$ it holds that
   \begin{eqnarray*}
   % \nonumber to remove numbering (before each equation)
     \Pr \bigg( \frac{C|Z_{1}|}{a_{n}} > \frac{\delta}{4(q+1)} \bigg) & = &
      \Pr \bigg( \frac{C|Z_{1}|}{a_{n}} > \frac{\delta}{4(q+1)},\,C > M \bigg) + \Pr \bigg( \frac{C|Z_{1}|}{a_{n}} > \frac{\delta}{4(q+1)},\,C \leq M \bigg)\\[0.6em]
       &\leq &  \Pr(C > M) + \Pr \bigg( \frac{|Z_{1}|}{a_{n}} > \frac{\delta}{4(q+1)M} \bigg).
   \end{eqnarray*}
Using the regular variation property
we obtain
\begin{equation*}
\lim_{n \to \infty}  \Pr \bigg( \frac{|Z_{1}|}{a_{n}} > \frac{\delta}{4(q+1)M} \bigg) =0,
\end{equation*}
and therefore from (\ref{e:Hn1new}) we get
$ \limsup_{n \to \infty} \Pr (H_{n,1}) \leq (3q+1) \Pr (C > M).$
Letting $M \to \infty$ we conclude
\begin{equation}\label{e:est2}
\lim_{n \to \infty} \Pr(H_{n, 1})=0.
\end{equation}
Since $Z_{k}$ and $Z_{l}$ that appear in the formulation of $H_{n, 2}$ are independent, for an arbitrary $M>0$ it holds that
\begin{eqnarray}
 \nonumber \Pr(H_{n, 2} \cap \{C \leq M\}) &=& \sum_{k=1}^{n} \sum_{\scriptsize \begin{array}{c}
                          l=k-q  \\[-0.1em]
                          l \neq k
                        \end{array}}^{k+q} \Pr \bigg( \frac{C|Z_{k}|}{a_{n}} > \frac{\delta}{4(q+1)},\,\frac{C|Z_{l}|}{a_{n}} > \frac{\delta}{4(q+1)},\,C \leq M \bigg)\\[-0.1em]
  % \nonumber  & \leq &  \sum_{k=1}^{n} \sum_{\scriptsize \begin{array}{c}
 %                         l=k-q  \\[-0.6em]
 %                         l \neq k
%                        \end{array}}^{k+q} \Pr \bigg( \frac{|Z_{k}|}{a_{n}} > \frac{\delta}{4(q+1)M},\,\frac{|Z_{l}|}{a_{n}} > \frac{\delta}{4(q+1)M} \bigg)\\[-0.1em]
   \nonumber &  \leq & \sum_{k=1}^{n} \sum_{\scriptsize \begin{array}{c}
                          l=k-q  \\[-0.1em]
                          l \neq k
                        \end{array}}^{k+q} \Pr \bigg( \frac{|Z_{k}|}{a_{n}} > \frac{\delta}{4(q+1)M} \bigg) \Pr \bigg( \frac{|Z_{l}|}{a_{n}} > \frac{\delta}{4(q+1)M}\bigg)\\[-0.1em]
  \nonumber & = & \frac{2q}{n} \bigg[ n \Pr \bigg( \frac{|Z_{1}|}{a_{n}} > \frac{\delta}{4(q+1)M} \bigg) \bigg]^{2},
 \end{eqnarray}
and an application of the regular variation property yields
$ \lim_{n \to \infty} \Pr(H_{n, 2} \cap \{ C \leq M \})=0$.
%\begin{equation*}
%\lim_{n \to \infty} \Pr(H_{n, 2} \cap \{ C \leq M \})=0.
%\end{equation*}
Hence
 $$ \limsup_{n \to \infty}  \Pr(H_{n, 2}) \leq \limsup_{n \to \infty} \Pr(H_{n, 2} \cap \{C > M\}) \leq \Pr (C > M),$$
and letting again $M \to \infty$ we conclude
\begin{equation}\label{e:est5}
\lim_{n \to \infty} \Pr(H_{n, 2})=0.
\end{equation}
From the definition of the set $H_{n, 3}$ it follows that $k, j-l_{1}, j-l$ are all different, which implies that the random variables $Z_{k}$, $Z_{j-l_{1}}$ and $Z_{j-l}$ are independent. Using this and stationarity we obtain
$$ \Pr(H_{n, 3} \cap \{C \leq M\}) \leq \frac{q(q+1)}{n} \bigg[ n \Pr \bigg( \frac{|Z_{1}|}{a_{n}} > \frac{\delta}{4(q+1)M} \bigg) \bigg]^{3} $$
for arbitrary $M>0$,
and hence
\begin{equation}\label{e:est10}
\lim_{n \to \infty} \Pr(H_{n, 3})=0.
\end{equation}
Now from (\ref{e:estim1}) and (\ref{e:est2})--(\ref{e:est10}) we obtain
$ \lim_{n \to \infty} \Pr(D_{n})=0,$
and hence (\ref{e:Yn}) yields
\begin{equation}\label{eq:Ynend}
\lim_{n \to \infty} \Pr(Y_{n}> \delta)=0.
\end{equation}
It remains to estimate the second term on the right hand side of (\ref{eq:AB}). Let
$$ E_{n} = \{\exists\,v \in \Gamma_{M_{n}} \ \textrm{such that} \ d(v,z) > \delta \ \textrm{for every} \ z \in \Gamma_{ W_{n}} \}.$$
Then by the definition of $T_{n}$
\begin{equation}\label{e:Tnfirst}
 \{T_{n} > \delta\} \subseteq E_{n}.
\end{equation}
On the event $E_{n}$ it holds that $d(v, \Gamma_{ W_{n}})> \delta$.
Interchanging the roles of the processes $M_{n}(\,\cdot\,)$ and $W_{n}(\,\cdot\,)$, in the same way as before for the event $D_{n}$ it can be shown that
\begin{equation}\label{e:i*qneg}
 \Big| W_{n} \Big( \frac{i^{*}-l}{n} \Big) - M_{n} \Big( \frac{i^{*}}{n} \Big) \Big| > \delta
\end{equation}
for all $l=0,\ldots, q$ (such that $i^{*}-l \geq 0$), where $i^{*}=\floor{nt_{v}}$ or $i^{*}=\floor{nt_{v}}-1$, and $v=(t_{v},x_{v})$.

Now we want to show that $E_{n} \cap (H_{n, 1} \cup H_{n, 2})^{c} \subseteq H_{n, 3}$, and hence assume the event $E_{n} \cap (H_{n, 1} \cup H_{n, 2})^{c}$ occurs. Since (\ref{e:i*qneg}) (for $l=0$) is in fact (\ref{e:i*}), repeating the arguments used for $D_{n}$ we conclude that (\ref{e:est31}) holds. Here we also claim that $M_{n}(i^{*}/n) = X_{j}/a_{n}$ for some $j \in \{1,\ldots,i^{*}\} \setminus \{k,\ldots,k+q\}$. Hence assume this is not the case, i.e. $M_{n}(i^{*}/n) = X_{j}/a_{n}$ for some $j \in \{k,\ldots,k+q\}$ (with $j \leq i^{*}$). We can repeat the arguments from (i) above to conclude that $k + q \leq i^{*}$ is not possible. It remains to see what happens when $k + q > i^{*}$. Let
$$ W_{n} \Big( \frac{i^{*}-q}{n} \Big) = \frac{ |Z_{s}|}{a_{n}} \big(C_{+} 1_{\{ Z_{s}>0 \}} + C_{-} 1_{\{ Z_{s}<0 \}} \big)$$
for some $s \in \{1, \ldots, i^{*}-q\}$. Note that $i^{*}-q \geq 1$ since $q+1 \leq k  \leq i^{*}$. We distinguish two cases:
\begin{itemize}
\item[(a)] $W_{n}(i^{*}/n) > M_{n}(i^{*}/n)$. In this case the definition of $i^{*}$ implies that $M_{n}(i^{*}/n) \leq x_{v} \leq W_{n}(i^{*}/n)$. Since $|t_{v}-(i^{*}-q)/n| < (q+1)/n \leq \delta$,  from $d(v, \Gamma_{W_{n}})> \delta$ we conclude
$$ \widetilde{d} \Big((x_{v}, \Big[ W_{n} \Big( \frac{i^{*}-q}{n} \Big), W_{n} \Big(\frac{i^{*}}{n} \Big) \Big] \Big) > \delta,$$
where $\widetilde{d}$ is the Euclidean metric on $\mathbb{R}$. This yields
%$$W_{n} \Big( \frac{i^{*}-q}{n} \Big) > M_{n} \Big( \frac{i^{*}}{n} \Big),$$
$W_{n} ((i^{*}-q)/n) > M_{n} (i^{*}/n)$,
and from (\ref{e:i*qneg}) we obtain
\begin{equation}\label{e:estE1}
 W_{n} \Big( \frac{i^{*}-q}{n} \Big) > M_{n} \Big( \frac{i^{*}}{n} \Big) + \delta.
\end{equation}
From this, taking into account relation (\ref{e:phipm3}), we obtain
$$ \frac{C|Z_{s}|}{a_{n}} \geq   \,W_{n} \Big( \frac{i^{*}-q}{n} \Big) >  -\frac{\delta}{4} + \delta = \frac{3\delta}{4} > \frac{\delta}{4(q+1)},$$
and since $H_{n, 2}^{c}$ occurs it follows that
\begin{equation}\label{e:estE12}
\frac{C|Z_{l}|}{a_{n}} \leq \frac{\delta}{4(q+1)} \quad \textrm{for every} \  l \in \{s-q,\ldots,s+q\} \setminus \{s\}.
\end{equation}
Let $p_{0} \in \{0,\ldots,q\}$ be such that $C_{p_{0}}Z_{s} =  |Z_{s}| \big(C_{+} 1_{\{ Z_{s}>0 \}} + C_{-} 1_{\{ Z_{s}<0 \}} \big) $.
Since $s + p_{0} \leq i^{*}$, it holds that
\begin{equation}\label{e:estE2}
 \frac{X_{j}}{a_{n}} = M_{n} \Big( \frac{i^{*}}{n} \Big) \geq \frac{X_{s+p_{0}}}{a_{n}} = \frac{C_{p_{0}}Z_{s}}{a_{n}} + F_{5},
\end{equation}
where
$$ F_{5} =  \sum_{\scriptsize \begin{array}{c}
                          m=0 \\[-0.1em]
                          m \neq p_{0}
                        \end{array}}^{q}\frac{C_{m}Z_{s+p_{0}-m}}{a_{n}}.$$
From (\ref{e:estE1}) and (\ref{e:estE2}) we obtain
$$ \frac{C_{p_{0}} Z_{s}}{a_{n}} > \frac{X_{j}}{a_{n}} + \delta \geq \frac{C_{p_{0}} Z_{s}}{a_{n}} + F_{5} + \delta,$$
i.e.
$F_{5} < -\delta$. But this is not possible since by (\ref{e:estE12}), $|F_{5}| \leq \delta/4$,
%$$ |F_{5}| \leq \frac{\delta}{4},$$
and we conclude that this case can not happen.
\item[(b)] $W_{n}(i^{*}/n) \leq M_{n}(i^{*}/n)$. Then from (\ref{e:i*qneg}) we get
\begin{equation}\label{e:estE3}
 M_{n} \Big( \frac{k+q}{n} \Big) \geq M_{n} \Big( \frac{i^{*}}{n} \Big) \geq W_{n} \Big( \frac{i^{*}}{n} \Big) + \delta.
\end{equation}
Therefore
$$ \Big| W_{n} \Big( \frac{i^{*}}{n} \Big) - M_{n} \Big(\frac{k+q}{n} \Big) \Big| > \delta,$$
and repeating the arguments from (ii) above we conclude that this case also can not happen.
\end{itemize}
Thus we have proved that  $M_{n}(i^{*}/n) = X_{j}/a_{n}$ for some $j \in \{1,\ldots,i^{*}\} \setminus \{k,\ldots,k+q\}$. Similar as before one can prove now that Cases (A1) and (A2) can not happen (when $k+q > i^{*}$ we use also the arguments from (a) and (b)), which means that only Case (A3) is possible. In that case the event $H_{n, 3}$ occurs, and thus we have proved that $E_{n} \cap (H_{n, 1} \cup H_{n, 2} \cup)^{c} \subseteq H_{n, 3}$. Hence
$$ E_{n} \subseteq H_{n, 1} \cup H_{n, 2} \cup H_{n, 3},$$
and from (\ref{e:est2})--(\ref{e:est10}) we obtain
$ \lim_{n \to \infty} \Pr(E_{n})=0.$
Therefore (\ref{e:Tnfirst}) yields
\be\label{eq:Tnend}
\lim_{n \to \infty} \Pr(T_{n}> \delta)=0.
\ee
Now from (\ref{eq:AB}), (\ref{eq:Ynend}) and (\ref{eq:Tnend}) we obtain (\ref{e:max1}),
which means that $M_{n}(\,\cdot\,) \dto M(\,\cdot\,)$
in $D_{\uparrow}[0,1]$ with the $M_{1}$ topology. This proves the theorem.
\end{proof}

\begin{rem}
If the sequence of coefficients $(C_{j})$ is a.s.~of the same sign, the limiting process $M(\cdot)$ in Theorem~\ref{t:FLT} reduces to
$C^{(*)}W^{(*)}(\cdot)$ with $W^{(*)}$ being an extremal process with exponent measure $\mu_{*}$, independent of $C^{(*)}$, where in the case of non-negative coefficients $C^{(*)} = C^{(1)} \eind  \max \{ C_{j} \vee 0 : j=0, \ldots, q\}$ and $\mu_{*}(\rmd x) = \mu_{+}(\rmd x) = p \alpha x^{-\alpha-1} \rmd x$ for $x>0$, while in the case of non-positive coefficients $C^{(*)} =  C^{(2)} \eind  \max \{ -C_{j} \vee 0 : j=0, \ldots, q\}$ and $\mu_{*}(\rmd x) = \mu_{-}(\rmd x) = r \alpha x^{-\alpha-1} \rmd x$ for $x>0$. Similarly, if the innovations $(Z_{i})$ are a.s.~of the same sign, the limiting process is again $C^{(*)}W^{(*)}(\cdot)$, with $\mu_{*}(\rmd x) = \alpha x^{-\alpha-1} \rmd x$ for $x>0$, and $C^{(*)}=C^{(1)}$ if the innovations are non-negative and $C^{(*)}=C^{(2)}$ if they are non-positive.
\end{rem}

Now we turn our attention to infinite order linear processes.
 The idea is to approximate them by a sequence of finite order linear processes, for which Theorem~\ref{t:FLT} holds, and to show that the error of approximation is negligible in the limit. To accomplish this in the case $\alpha \in (0,1)$ we will use the arguments from Krizmani\'{c}~\cite{Kr19}, and in the case $\alpha \in [1,\infty)$ the arguments from the proof of Lemma 2 in Tyran-Kami\'{n}ska~\cite{Ty10b} adapted to linear processes with random coefficients instead of deterministic.

\begin{thm}\label{t:infFLT}
Let $(X_{i})$ be a linear process defined by
$$ X_{i} = \sum_{j=0}^{\infty}C_{j}Z_{i-j}, \qquad i \in \mathbb{Z},$$
where $(Z_{i})_{i \in \mathbb{Z}}$ is an i.i.d.~sequence of random variables satisfying $(\ref{e:regvar})$ and $(\ref{e:pr})$ with $\alpha >0$, and
 $(C_{i})_{i \geq 0 }$ is a sequence of random variables independent of $(Z_{i})$ such that the series defying the above linear process is a.s.~convergent. If $\alpha \in (0,1)$ suppose
 \begin{equation}\label{e:momcondr}
 \sum_{j=0}^{\infty} \mathrm{E} |C_{j}|^{\delta} < \infty \qquad \textrm{for some}  \ \delta \in (0, \alpha),
 \end{equation}
 and
\begin{equation}\label{e:mod1}
  \sum_{j=0}^{\infty}\mathrm{E}|C_{j}|^{\gamma} < \infty \qquad \textrm{for some} \ \gamma \in (\alpha, 1),
\end{equation}
 while if $\alpha = 1$ suppose also $(\ref{e:momcondr})$, and if $\alpha >1$ suppose
\begin{equation}\label{eq:infmaTK3}
\sum_{j=0}^{\infty} \mathrm{E}|C_{j}| < \infty.
\end{equation}
 Then
$M_{n}(\,\cdot\,) \dto  M(\,\cdot\,)$, as $n \to \infty$,
in $D_{\uparrow}[0,1]$ with the $M_{1}$ topology, with $M$ as defined in $(\ref{e:limprocess})$.
\end{thm}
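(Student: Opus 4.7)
The plan is to approximate the infinite order process by its finite order truncations, apply Theorem~\ref{t:FLT} to each, and show that the approximation error is asymptotically negligible. For each $q \in \mathbb{N}$ introduce
$$X_i^{(q)} := \sum_{j=0}^{q} C_j Z_{i-j}, \qquad i \in \mathbb{Z},$$
with associated partial maximum process $M_n^{(q)}$ defined as in $(\ref{eq:defWn})$. By Theorem~\ref{t:FLT}, for each fixed $q$,
$$M_n^{(q)}(\,\cdot\,) \dto M^{(q)}(\,\cdot\,) := C^{(1,q)} W^{(1)}(\,\cdot\,) \vee C^{(2,q)} W^{(2)}(\,\cdot\,)$$
in $D_\uparrow[0,1]$ with the $M_1$ topology, where $(C^{(1,q)}, C^{(2,q)})$ is independent of $(W^{(1)},W^{(2)})$ and equidistributed with $(\max_{0 \leq j \leq q}C_j \vee 0,\,\max_{0 \leq j \leq q}(-C_j) \vee 0)$. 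Under any of the stated moment conditions, Borel--Cantelli gives $C_j \to 0$ a.s., hence $C_+, C_- < \infty$ a.s., and a suitable coupling yields $C^{(1,q)} \uparrow C^{(1)}$, $C^{(2,q)} \uparrow C^{(2)}$ a.s.; continuous mapping then gives $M^{(q)} \dto M$ in the same topology.

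By the standard approximation theorem (Theorem~3.2 in Billingsley~\cite{Bi68}), it therefore suffices to show that for every $\epsilon > 0$,
\begin{equation}\label{e:plan-goal}
\lim_{q \to \infty} \limsup_{n \to \infty} \Pr\big[d_{M_1}(M_n, M_n^{(q)}) > \epsilon\big] = 0.
\end{equation}
Since $d_{M_1}(x,y) \leq \|x-y\|_{[0,1]}$ on $D[0,1]$, and since
$$|M_n(t) - M_n^{(q)}(t)| \leq \frac{1}{a_n} \max_{1 \leq i \leq n} |R_i^{(q)}|, \qquad R_i^{(q)} := \sum_{j > q} C_j Z_{i-j},$$
stationarity together with a union bound reduces $(\ref{e:plan-goal})$ to establishing
\begin{equation}\label{e:plan-tail}
\lim_{q \to \infty} \limsup_{n \to \infty} n\,\Pr\big(|R_1^{(q)}| > \epsilon a_n\big) = 0.
\end{equation}

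The proof of $(\ref{e:plan-tail})$ splits by regime. For $\alpha \in (0,1)$ the plan is to follow Krizmani\'{c}~\cite{Kr19}: conditioning on the coefficients, split the tail probability via Markov's inequality with exponents on either side of $\alpha$, and combine Karamata's theorem with $(\ref{e:momcondr})$ and $(\ref{e:mod1})$ to bound $n\,\Pr(|R_1^{(q)}| > \epsilon a_n)$ by a quantity of order $\sum_{j > q}\mathrm{E}|C_j|^{\delta}$ (with possibly adjusted exponents), which vanishes as $q \to \infty$. For $\alpha \geq 1$ the plan is to adapt the truncation-and-centering argument from the proof of Lemma~2 in Tyran-Kami\'{n}ska~\cite{Ty10b} to the random-coefficient setting: conditionally on $(C_j)_{j > q}$, decompose each innovation $Z_{i-j}$ into its truncation at level $a_n$ and its tail part, center the truncated part (using $\mathrm{E}Z_1 = 0$ and $(\ref{eq:infmaTK3})$ when $\alpha > 1$, or symmetry and $(\ref{e:momcondr})$ when $\alpha = 1$), bound the centered truncated part via Chebyshev's inequality, and the tail part by a union bound. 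The main obstacle is $(\ref{e:plan-tail})$ in the regime $\alpha \geq 1$: without an $\alpha$-th absolute moment of $Z_1$, the naive summation no longer produces a summable series, and one must carefully balance the truncation level against the centering while working pathwise in the random coefficients so that the deterministic-coefficient arguments of Tyran-Kami\'{n}ska transfer. Once $(\ref{e:plan-tail})$ is established, Slutsky's theorem together with the chain $M_n^{(q)} \dto M^{(q)} \dto M$ yields $M_n \dto M$ in $D_\uparrow[0,1]$ with the $M_1$ topology.
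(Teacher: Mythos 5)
Your overall architecture (truncate to finite order, apply Theorem~\ref{t:FLT}, show the truncation error is negligible, conclude by a converging-together argument) is the same as the paper's. Two differences are benign: the paper's approximation $X_i^q=\sum_{j=0}^{q-2}C_jZ_{i-j}+C^{q,\mathrm{max}}Z_{i-q+1}+C^{q,\mathrm{min}}Z_{i-q}$ is rigged so that its coefficient maxima are exactly $C_+$ and $C_-$, making the fixed-$q$ limit equal to $M$ itself and dispensing with your extra step $M^{(q)}\dto M$ (which your monotone coupling can indeed supply); and in the case $\alpha\in(0,1)$, bounding the maximum by the sum and invoking Lemma~3.2 of Krizmani\'{c}~\cite{Kr19} under (\ref{e:momcondr}) and (\ref{e:mod1}) is exactly what the paper does.

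There is, however, a genuine gap for $\alpha\ge1$, and it begins already at your reduction. After the union bound over $i$ you require $\lim_{q}\limsup_{n}n\Pr\big(|R_1^{(q)}|>\epsilon a_n\big)=0$; with random, possibly unbounded coefficients this is in general \emph{false} under the stated hypotheses, because the union bound multiplies the coefficient randomness by $n$. For example, let $\alpha>1$, $C_j=A2^{-j}$ with $A\ge1$ independent of $(Z_i)$ and $\Pr(A>x)=x^{-\beta}$, $x\ge1$, for some $\beta\in(1,\alpha)$: then $\sum_j\mathrm{E}|C_j|<\infty$, so (\ref{eq:infmaTK3}) holds and the series converges a.s., yet $R_1^{(q)}=A\sum_{j>q}2^{-j}Z_{1-j}$ satisfies $\Pr(|R_1^{(q)}|>\epsilon a_n)\ge\Pr\big(|\sum_{j>q}2^{-j}Z_{1-j}|>1\big)\Pr(A>\epsilon a_n)\ge c\,a_n^{-\beta}$, whence $n\Pr(|R_1^{(q)}|>\epsilon a_n)$ is of order $n^{1-\beta/\alpha+o(1)}\to\infty$ for every fixed $q$. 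The coefficient randomness must be handled before any bound proportional to $n$: the paper pulls out the event $\{\sum_{j\ge q-1}|\widetilde C_j|>1\}$ once (here $\widetilde C_j$ are the coefficients of $X_i-X_i^q$; by Markov this event has probability at most $\sum_{j\ge q-1}\mathrm{E}|\widetilde C_j|$, small in $q$ uniformly in $n$), and on its complement keeps the coefficients as weights so that only the products $\mathrm{E}|\widetilde C_j|\,\mathrm{E}|Z_{n,1}^{\le}|^{\varphi}$ ever get summed over $i$. Secondly, even granting a correct reduction, your $\alpha\ge1$ argument is only a plan, and the plan invokes centering \`a la Tyran-Kami\'{n}ska~\cite{Ty10b} using $\mathrm{E}Z_1=0$ (for $\alpha>1$) or symmetry of $Z_1$ (for $\alpha=1$), neither of which is a hypothesis of the theorem; you yourself flag the truncation/centering balance as unresolved. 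In fact no centering is needed, since one controls maxima of absolute values rather than partial sums: split each innovation at level $a_n$; for the part $|Z|\le a_n$ apply H\"older with weights $|\widetilde C_j|$ on $\{\sum_{j\ge q-1}|\widetilde C_j|\le1\}$ and Markov at an order $\varphi>\alpha$, with Karamata giving $n\mathrm{E}|Z_{n,1}^{\le}|^{\varphi}\to\alpha/(\varphi-\alpha)$; for the part $|Z|>a_n$ apply Markov at order $1$ when $\alpha>1$ (Karamata: $n\mathrm{E}|Z_{n,1}^{>}|\to\alpha/(\alpha-1)$) and at order $\delta<1$ when $\alpha=1$. All resulting bounds are constants times $\sum_{j\ge q-1}\mathrm{E}|C_j|$ or $\sum_{j\ge q-1}\mathrm{E}|C_j|^{\delta}$, which vanish as $q\to\infty$ by (\ref{eq:infmaTK3}) and (\ref{e:momcondr}). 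Until the negligibility step is proved along such lines, the $\alpha\ge1$ case of your proof is incomplete.
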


\begin{proof}
For $q \in \mathbb{N}$, $q \geq 2$, define
$$ X_{i}^{q} = \sum_{j=0}^{q-2}C_{j}Z_{i-j} + C^{q, \textrm{max}} Z_{i-q+1} + C^{q, \textrm{min}} Z_{i-q}, \qquad i \in \mathbb{Z},$$
and
$$ M_{n, q}(t) = \bigvee_{i=1}^{\floor{nt}} \frac{X_{i}^{q}}{a_{n}}, \qquad t \in [0,1]$$
(with $M_{n, q}(t)=X_{1}^{q}/a_{n}$ if $t \in [0, 1/n)$), where $C^{q, \textrm{max}}= \max \{C_{j} : j \geq q-1\}$ and $C^{q, \textrm{min}}= \min \{C_{j} : j \geq q-1\}$.
Observe that
$$\max \{ C_{j} \vee 0 : j =0,\ldots, q-1\} \vee (C^{q, \textrm{max}} \vee 0) \vee (C^{q, \textrm{min}} \vee 0) = C_{+},$$
 $$ \max \{ -C_{j} \vee 0 : j =0,\ldots, q-1\} \vee (-C^{q, \textrm{max}} \vee 0) \vee (-C^{q, \textrm{min}} \vee 0) = C_{-},$$
and therefore for the finite order moving average process $(X_{i}^{q})_{i}$ by Theorem~\ref{t:FLT} we obtain
$$ M_{n, q}(\,\cdot\,)  \dto M(\,\cdot\,) \qquad \textrm{as} \ n \to \infty,$$
in $D_{\uparrow}[0,1]$ with the $M_{1}$ topology. If we show that for every $\epsilon >0$
\begin{equation}\label{e:Slutskyinf01}
 \lim_{q \to \infty} \limsup_{n \to \infty}\Pr[d_{M_{1}}(M_{n}, M_{n, q})> \epsilon]=0,
\end{equation}
then by a generalization of Slutsky's theorem (see Theorem 3.5 in Resnick~\cite{Resnick07}) it will follow $M_{n}(\,\cdot\,) \dto M(\,\cdot\,)$ in $D_{\uparrow}[0,1]$ with the $M_{1}$ topology. Since the metric $d_{M_{1}}$ on $D_{\uparrow}[0,1]$ is bounded above by the uniform metric on $D_{\uparrow}[0,1]$, it suffices to show that
$$ \lim_{q \to \infty} \limsup_{n \to \infty}\Pr \bigg( \sup_{0 \leq t \leq 1}|M_{n}(t) - M_{n, q}(t)|> \epsilon \bigg)=0.$$
Now we treat separately the cases $\alpha \in (0,1)$ and $\alpha \in [1,\infty)$.\\[-0.2em]

Case $\alpha \in (0,1)$.
Recalling the definitions, we have
\begin{equation*}
 \Pr \bigg( \sup_{0 \leq t \leq 1}|M_{n}(t) - M_{n, q}(t)|> \epsilon \bigg) \leq \Pr \bigg( \bigvee_{i=1}^{n}\frac{|X_{i}-X_{i}^{q}|}{a_{n}} > \epsilon \bigg) \leq \Pr \bigg( \sum_{i=1}^{n}\frac{|X_{i}-X_{i}^{q}|}{a_{n}} > \epsilon \bigg).
\end{equation*}
Observe that
\begin{eqnarray*}
% \nonumber to remove numbering (before each equation)
  \sum_{i=1}^{n}|X_{i}-X_{i}^{q}| & = & \sum_{i=1}^{n} \bigg| \sum_{j=0}^{\infty}C_{j}Z_{i-j} - \sum_{j=0}^{q-2}C_{j}Z_{i-j} - C^{q, \textrm{max}}Z_{i-q+1} -  C^{q, \textrm{min}}Z_{i-q}\bigg| \\[0.1em]
  % & \hspace*{-8em} = & \hspace*{-4em} \sum_{i=1}^{n} \bigg| (C_{q-1}-C^{q, \textrm{max}}) Z_{i-q+1} + (C_{q} - C^{q, \textrm{min}}) Z_{i-q} + \sum_{j=q+1}^{\infty}C_{j}Z_{i-j}\bigg|\\[0.2em]
  & \hspace*{-8em} \leq & \hspace*{-4em} \sum_{i=1}^{n} \bigg[ |(C_{q-1} - C^{q, \textrm{max}}) Z_{i-q+1}| + |(C_{q} - C^{q, \textrm{min}}) Z_{i-q}| + \sum_{j=q+1}^{\infty}|C_{j} Z_{i-j}| \bigg]\\[0.1em]
   & \hspace{-8em} \leq & \hspace*{-4em} \bigg( 2 \sum_{j=q+1}^{\infty}|C_{j}| \bigg) \sum_{i=1}^{n} ( |Z_{i-q+1}| + |Z_{i-q}|) + \sum_{i=1}^{n} \sum_{j=q+1}^{\infty}|C_{j}|\,|Z_{i-j}|\\[0.1em]
    & \hspace{-8em} \leq & \hspace*{-4em} \bigg( 5 \sum_{j=q+1}^{\infty}|C_{j}| \bigg) \sum_{i=1}^{n+1} |Z_{i-q}| + \sum_{i=-\infty}^{0}|Z_{i-q}| \sum_{j=1}^{n}|C_{q-i+j}|,
\end{eqnarray*}
where in the second inequality above we used the simple fact that $|C_{q-1} - C^{q, \textrm{max}}| \leq 2 \sum_{j=q+1}^{\infty}|C_{j}|$ (and analogously if $C^{q, \textrm{max}}$ is replaced by $C^{q, \textrm{min}}$), and a change of variables and rearrangement of sums in the third inequality. Since conditions $(\ref{e:momcondr})$ and $(\ref{e:mod1})$ by Lemma 3.2 in Krizmani\'{c}~\cite{Kr19} imply
$$ \lim_{q \to \infty} \limsup_{n \to \infty} \Pr \bigg[ \bigg( 5 \sum_{j=q+1}^{\infty}|C_{j}| \bigg) \sum_{i=1}^{n+1}\frac{|Z_{i-q}|}{a_{n}} + \sum_{i=-\infty}^{0}\frac{|Z_{i-q}|}{a_{n}} \sum_{j=1}^{n}|C_{q-i+j}| > \epsilon \bigg] =0,$$
we obtain
$$ \lim_{q \to \infty} \limsup_{n \to \infty}  \Pr \bigg( \sup_{0 \leq t \leq 1}|M_{n}(t) - M_{n, q}(t)|> \epsilon \bigg)= 0,$$
which means that $M_{n}(\,\cdot\,) \dto M(\,\cdot\,)$ as $n \to \infty$ in $(D_{\uparrow}[0,1], d_{M_{1}})$.\\[-0.4em]

Case $\alpha \in [1,\infty)$.
Define $Z_{n,j}^{\leq} = a_{n}^{-1} Z_{j} 1_{\{ |Z_{j}| \leq a_{n} \}}$ and $Z_{n,j}^{>} = a_{n}^{-1} Z_{j} 1_{\{ |Z_{j}| > a_{n} \}}$  for $j \in \mathbb{Z}$ and $n \in \mathbb{N}$,
$$ \widetilde{C}_{j} = \left\{ \begin{array}{cc}
                                   C_{j}, & \quad \textrm{if} \ j > q,\\[0.3em]
                                   C_{q}-C^{q, \textrm{min}}, & \quad \textrm{if} \ j=q,\\[0.3em]
                                   C_{q-1}-C^{q, \textrm{max}}, & \quad \textrm{if} \ j=q-1,
                                 \end{array}\right.$$
and note that
\begin{eqnarray*}
\nonumber  |M_{n}(t) - M_{n,q}(t)| & = & \bigg| \bigvee_{i=1}^{\floor{nt}} \frac{X_{i}}{a_{n}} -  \bigvee_{i=1}^{\floor{nt}} \frac{X_{i}^{q}}{a_{n}} \bigg| \leq  \bigvee_{i=1}^{\floor{nt}} \frac{|X_{i}-X_{i}^{q}|}{a_{n}} = \bigvee_{i=1}^{\floor{nt}} \bigg| \sum_{j=q-1}^{\infty} \frac{\widetilde{C}_{j}Z_{i-j}}{a_{n}} \bigg| \\[0.1em]
& = &  \bigvee_{i=1}^{\floor{nt}}  \bigg| \sum_{j=q-1}^{\infty} \widetilde{C}_{j}Z_{n,i-j}^{\leq} +  \sum_{j=q-1}^{\infty} \widetilde{C}_{j}Z_{n,i-j}^{>} \bigg|.
\end{eqnarray*}
Using again the fact that the $M_{1}$ metric on $D_{\uparrow}[0,1]$ is bounded above by the uniform metric we get
\begin{eqnarray}\label{eq:I1I2}
\nonumber   \Pr[d_{M_{1}}(M_{n}, M_{n, q})> \epsilon] & \leq & \Pr \bigg( \sup_{0 \leq t \leq 1} |M_{n}(t) - M_{n,q}(t)| > \epsilon \bigg)\\[0.4em]
 & \hspace*{-30em} \leq & \hspace*{-15em} \Pr \bigg(\bigvee_{i=1}^{n} \bigg| \sum_{j=q-1}^{\infty} \widetilde{C}_{j}Z_{n,i-j}^{\leq} \bigg| > \frac{\epsilon}{2} \bigg) + \Pr \bigg( \bigvee_{i=1}^{n} \bigg| \sum_{j=q-1}^{\infty} \widetilde{C}_{j}Z_{n,i-j}^{>} \bigg| > \frac{\epsilon}{2} \bigg) =: I_{1} + I_{2}.
\end{eqnarray}
To estimate $I_{1}$ note that
%\begin{eqnarray*}
% I_{1} & \leq & \Pr \bigg(\bigvee_{i=1}^{n} \bigg| \sum_{j=q-1}^{\infty} \widetilde{C}_{j}Z_{n,i-j}^{\leq} \bigg| > \frac{\epsilon}{2},\,\sum_{j=q-1}^{\infty}|\widetilde{C}_{j}| > 1 \bigg) + \Pr \bigg(\bigvee_{i=1}^{n} \bigg| %\sum_{j=q-1}^{\infty} \widetilde{C}_{j}Z_{n,i-j}^{\leq} \bigg| > \frac{\epsilon}{2},\,\sum_{j=q-1}^{\infty}|\widetilde{C}_{j}| \leq 1 \bigg)\\[0.2em]
%  & \leq & \Pr \bigg(\sum_{j=q-1}^{\infty}|\widetilde{C}_{j}| > 1 \bigg) + \sum_{i=1}^{n} \Pr \bigg(\bigg| \sum_{j=q-1}^{\infty} \widetilde{C}_{j}Z_{n,i-j}^{\leq} \bigg| > %\frac{\epsilon}{2},\,\sum_{j=q-1}^{\infty}|\widetilde{C}_{j}| \leq 1 \bigg)\\[0.2em]
%  & \leq & \sum_{j=q-1}^{\infty} \mathrm{E}|\widetilde{C}_{j}| + \sum_{i=1}^{n} \Pr \bigg(\bigg| \sum_{j=q-1}^{\infty} \widetilde{C}_{j}Z_{n,i-j}^{\leq} \bigg| > \frac{\epsilon}{2},\,\sum_{j=q-1}^{\infty}|\widetilde{C}_{j}| \leq 1 %\bigg),
% \end{eqnarray*}
 \begin{eqnarray*}
 I_{1} & \leq & \Pr \bigg(\sum_{j=q-1}^{\infty}|\widetilde{C}_{j}| > 1 \bigg) + \Pr \bigg(\bigvee_{i=1}^{n} \bigg| \sum_{j=q-1}^{\infty} \widetilde{C}_{j}Z_{n,i-j}^{\leq} \bigg| > \frac{\epsilon}{2},\,\sum_{j=q-1}^{\infty}|\widetilde{C}_{j}| \leq 1 \bigg)\\[0.2em]
  & \leq & \sum_{j=q-1}^{\infty} \mathrm{E}|\widetilde{C}_{j}| + \sum_{i=1}^{n} \Pr \bigg(\bigg| \sum_{j=q-1}^{\infty} \widetilde{C}_{j}Z_{n,i-j}^{\leq} \bigg| > \frac{\epsilon}{2},\,\sum_{j=q-1}^{\infty}|\widetilde{C}_{j}| \leq 1 \bigg),
 \end{eqnarray*}
where the last inequality follows by Markov's inequality.
Take some $\varphi > \alpha$ and let $\psi$ be such that $1/\varphi + 1/\psi=1$. Then by H\"{o}lder's inequality we have
 %\begin{eqnarray*}
 %\bigg( \sum_{j=q-1}^{\infty} |\widetilde{C}_{j} Z_{n,i-j}^{\leq}| \bigg)^{\varphi} & = & \bigg( \sum_{j=q-1}^{\infty} |\widetilde{C}_{j}|^{1/\psi} \cdot |\widetilde{C}_{j}|^{1/\varphi} |Z_{n,i-j}^{\leq}| %\bigg)^{\varphi}\\[0.4em]
 % & \leq & \bigg( \sum_{j=q-1}^{\infty}|\widetilde{C}_{j}| \bigg)^{\varphi/\psi} \sum_{j=q-1}^{\infty}|\widetilde{C}_{j}| \cdot | Z_{n,i-j}^{\leq}|^{\varphi},
 %\end{eqnarray*}
  \begin{eqnarray*}
 \bigg( \sum_{j=q-1}^{\infty} |\widetilde{C}_{j} Z_{n,i-j}^{\leq}| \bigg)^{\varphi}  &=&  \bigg( \sum_{j=q-1}^{\infty} |\widetilde{C}_{j}|^{1/\psi} \cdot |\widetilde{C}_{j}|^{1/\varphi} |Z_{n,i-j}^{\leq}| \bigg)^{\varphi}\\[0.4em]
   &\leq& \bigg( \sum_{j=q-1}^{\infty}|\widetilde{C}_{j}| \bigg)^{\varphi/\psi} \sum_{j=q-1}^{\infty}|\widetilde{C}_{j}| \cdot | Z_{n,i-j}^{\leq}|^{\varphi},
 \end{eqnarray*}
which leads to
\begin{eqnarray*}
 I_{1} & \leq &  \sum_{j=q-1}^{\infty} \mathrm{E}|\widetilde{C}_{j}| + \sum_{i=1}^{n} \Pr \bigg[ \bigg( \sum_{j=q-1}^{\infty}|\widetilde{C}_{j}| \bigg)^{\varphi/\psi} \sum_{j=q-1}^{\infty}|\widetilde{C}_{j}| \cdot | Z_{n,i-j}^{\leq}|^{\varphi} > \Big( \frac{\epsilon}{2} \Big)^{\varphi},\,\sum_{j=q-1}^{\infty}|\widetilde{C}_{j}| \leq 1 \bigg]\\[0.3em]
  &  \leq &  \sum_{j=q-1}^{\infty} \mathrm{E}|\widetilde{C}_{j}| + \sum_{i=1}^{n} \Pr \bigg[\sum_{j=q-1}^{\infty}|\widetilde{C}_{j}| \cdot | Z_{n,i-j}^{\leq}|^{\varphi} > \Big( \frac{\epsilon}{2} \Big)^{\varphi} \bigg].
\end{eqnarray*}
% \begin{eqnarray*}
% \Pr \bigg(\bigg| \sum_{j=q-1}^{\infty} \widetilde{C}_{j}Z_{n,i-j}^{\leq} \bigg| > \frac{\epsilon}{2},\,\sum_{j=q-1}^{\infty}|\widetilde{C}_{j}| \leq 1 \bigg) & &\\[0.1em]
%  & \hspace*{-24em} \leq & \hspace*{-12em} \Pr \bigg[ \bigg( \sum_{j=q-1}^{\infty}|\widetilde{C}_{j}| \bigg)^{\varphi/\psi} \sum_{j=q-1}^{\infty}|\widetilde{C}_{j}| \cdot | Z_{n,i-j}^{\leq}|^{\varphi} > \Big( \frac{\epsilon}{2} %\Big)^{\varphi},\,\sum_{j=q-1}^{\infty}|\widetilde{C}_{j}| \leq 1 \bigg]\\[0.1em]
%  & \hspace*{-24em} \leq & \hspace*{-12em} \Pr \bigg[\sum_{j=q-1}^{\infty}|\widetilde{C}_{j}| \cdot | Z_{n,i-j}^{\leq}|^{\varphi} > \Big( \frac{\epsilon}{2} \Big)^{\varphi} \bigg].
% \end{eqnarray*}
 This together with the Markov's inequality, the fact that the sequence $(C_{i})$ is independent of $(Z_{i})$ and stationarity of the sequence $(Z_{i})$ yields

 \begin{eqnarray}\label{e:I1new}
  \nonumber I_{1} & \leq & \sum_{j=q-1}^{\infty} \mathrm{E}|\widetilde{C}_{j}| +  \Big( \frac{\epsilon}{2} \Big)^{-\varphi} \sum_{i=1}^{n} \mathrm{E} \bigg( \sum_{j=q-1}^{\infty}|\widetilde{C}_{j}| \cdot | Z_{n,i-j}^{\leq}|^{\varphi} \bigg)\\[0.3em]
   \nonumber & = & \sum_{j=q-1}^{\infty} \mathrm{E}|\widetilde{C}_{j}| +  \frac{2^{\varphi}}{\epsilon^{\varphi}} \sum_{i=1}^{n} \sum_{j=q-1}^{\infty} \mathrm{E} |\widetilde{C}_{j}|\,\mathrm{E}|Z_{n,i-j}^{\leq}|^{\varphi}\\[0.3em]
  % \nonumber & = & \sum_{j=q-1}^{\infty} \mathrm{E}|\widetilde{C}_{j}| +  \frac{2^{\varphi}}{\epsilon^{\varphi}}  \sum_{j=q-1}^{\infty} \mathrm{E} |\widetilde{C}_{j}| \cdot n \mathrm{E}|Z_{n, 1}^{\leq}|^{\varphi}\\[0.3em]
   & = & \Big(1+  \frac{2^{\varphi}}{\epsilon^{\varphi}} n \mathrm{E}|Z_{n, 1}^{\leq}|^{\varphi} \Big) \sum_{j=q-1}^{\infty} \mathrm{E}|\widetilde{C}_{j}|.
  \end{eqnarray}
From the definition of $\widetilde{C}_{j}$ it follows $\sum_{j=q-1}^{\infty} \mathrm{E}|\widetilde{C}_{j}| \leq 5 \sum_{j=q-1}^{\infty} \mathrm{E}|C_{j}|$, and
by Karamata's theorem and (\ref{eq:niz}), as $n \to \infty$,
$$ n \mathrm{E}|Z_{n, 1}^{\leq}|^{\varphi}  = \frac{\mathrm{E} ( |Z_{1}|^{\varphi} 1 _{\{ |Z_{1}| \leq a_{n} \}})}{a_{n}^{\varphi} \Pr (|Z_{1}| > a_{n})} \cdot n \Pr(|Z_{1}| > a_{n}) \to \frac{\alpha}{\varphi-\alpha} < \infty.$$
Hence from (\ref{e:I1new}) we conclude that there exists a positive constant $D_{1}$ such that
\begin{equation}\label{e:I1new2}
 \limsup_{n \to \infty} I_{1} \leq D_{1} \sum_{j=q-1}^{\infty} \mathrm{E}|C_{j}|.
\end{equation}
In order to estimate $I_{2}$ assume first $\alpha \in (1,\infty)$. Applying again Markov's inequality, the fact that the sequence $(C_{i})$ is independent of $(Z_{i})$ and stationarity of $(Z_{i})$ we obtain
\begin{equation*}\label{eq:alpha1}
\nonumber I_{2}  \leq    \sum_{i=1}^{n} \Pr \bigg( \bigg| \sum_{j=q-1}^{\infty} \widetilde{C}_{j}Z_{n,i-j}^{>} \bigg| > \frac{\epsilon}{2} \bigg) \leq \frac{2}{\epsilon} \sum_{i=1}^{n}  \sum_{j=q-1}^{\infty} \mathrm{E} |\widetilde{C}_{j}Z_{n,i-j}^{>}| = \frac{2}{\epsilon}  \sum_{j=q-1}^{\infty} \mathrm{E} |\widetilde{C}_{j}| \cdot n \mathrm{E} |Z_{n,1}^{>}|.\\[0.4em]
\end{equation*}
By Karamata's theorem and relation (\ref{eq:niz}), as $n \to \infty$,
$$ n \mathrm{E}|Z_{n, 1}^{>}| = \frac{\mathrm{E} ( |Z_{1}| 1 _{\{ |Z_{1}| > a_{n} \}})}{a_{n} \Pr (|Z_{1}| > a_{n})} \cdot n \Pr(|Z_{1}| > a_{n})  \to \frac{\alpha}{\alpha-1} < \infty,$$
and hence we see that there exists a positive constant $D_{2}$ such that
\begin{equation}\label{e:I2new}
  \limsup_{n \to \infty} I_{2} \leq D_{2} \sum_{j=q-1}^{\infty} \mathrm{E}|C_{j}|
\end{equation}
In the case $\alpha =1$ Markov's inequality implies
\begin{equation*}
 I_{2} \leq  \sum_{i=1}^{n} \Pr \bigg( \bigg| \sum_{j=q-1}^{\infty} \widetilde{C}_{j}Z_{n,i-j}^{>} \bigg| > \frac{\epsilon}{2} \bigg)
  \leq  \frac{2^{\delta}}{\epsilon^{\delta}}  \sum_{i=1}^{n} \mathrm{E} \bigg| \sum_{j=q-1}^{\infty} \widetilde{C}_{j}Z_{n,i-j}^{>} \bigg|^{\delta}
\end{equation*}
with $\delta$ as in relation (\ref{e:momcondr}). Since $\delta < 1$, an application of the triangle inequality $|\sum_{i=1}^{\infty}a_{i}|^{s} \leq \sum_{i=1}^{\infty}|a_{i}|^{s}$ with $s \in (0,1]$ yields
$$I_{2} \leq  \frac{2^{\delta}}{\epsilon^{\delta}} \sum_{i=1}^{n} \sum_{j=q-1}^{\infty} \mathrm{E} | \widetilde{C}_{j}Z_{n,i-j}^{>}|^{\delta} = \frac{2^{\delta}}{\epsilon^{\delta} } \sum_{j=q-1}^{\infty} \mathrm{E} |\widetilde{C}_{j}|^{\delta} \cdot n \mathrm{E}|Z_{n,1}^{>}|^{\delta}.$$
%and hence
%$$ I_{2} \leq \frac{2^{\delta}}{\epsilon^{\delta} } \sum_{j=q-1}^{\infty} \mathrm{E} |\widetilde{C}_{j}|^{\delta} \cdot n \mathrm{E}|Z_{n,1}^{>}|^{\delta}.$$
From this, since by Karamata's theorem
$$ \lim_{n \to \infty} n \mathrm{E}|Z_{n,1}^{>}|^{\delta} = \frac{n}{a_{n}^{\delta}} \mathrm{E} \Big( |Z_{1}|^{\delta} 1 _{\{ |Z_{1}|>a_{n} \}} \Big) = \frac{1}{1-\delta} < \infty,$$
and by a new application of the triangle inequality
\begin{eqnarray*}
 \sum_{j=q-1}^{\infty} \mathrm{E} |\widetilde{C}_{j}|^{\delta} & = & \mathrm{E} |\widetilde{C}_{q-1}|^{\delta} + \mathrm{E} |\widetilde{C}_{q}|^{\delta} + \sum_{j=q+1}^{\infty} \mathrm{E} |\widetilde{C}_{j}|^{\delta}\\[0.2em]
  & \leq & \mathrm{E} \bigg( 2 \sum_{j=q-1}^{\infty} |C_{j}| \bigg)^{\delta} + \mathrm{E} \bigg( 2 \sum_{j=q-1}^{\infty} |C_{j}| \bigg)^{\delta} + \sum_{j=q+1}^{\infty} \mathrm{E} |C_{j}|^{\delta}\\[0.2em]
  & \leq &  2^{\delta} \sum_{j=q-1}^{\infty} \mathrm{E} |C_{j}|^{\delta} + 2^{\delta} \sum_{j=q-1}^{\infty} \mathrm{E} |C_{j}|^{\delta} + \sum_{j=q+1}^{\infty} \mathrm{E} |C_{j}|^{\delta},
 % & \leq & (2^{\delta +1} + 1) \sum_{j=q-1}^{\infty} \mathrm{E} |C_{j}|^{\delta},
 \end{eqnarray*}
it follows that there exists a positive constant $D_{3}$ such that
\begin{equation*}\label{eq:I2new2}
 \limsup_{n \to \infty} I_{2} \leq D_{3} \sum_{j=q-1}^{\infty} \mathrm{E} |C_{j}|^{\delta}.
\end{equation*}
This together with (\ref{eq:I1I2}), (\ref{e:I1new2}) and (\ref{e:I2new}) yields
$$ \limsup_{n \to \infty}\Pr[d_{M_{1}}(M_{n}, M_{n,q})> \epsilon] \leq D_{1} \sum_{j=q-1}^{\infty} \mathrm{E} |C_{j}|+ (D_{2}+D_{3}) \sum_{j=q-1}^{\infty} \mathrm{E}|C_{j}|^{s},$$
 where
% $$ s = \left\{ \begin{array}{cc}
 %                                  \delta, & \quad \textrm{if} \ \alpha = 1,\\[0.1em]
 %                                  1, & \quad \textrm{if} \ \alpha \in (1,\infty).
 %                                \end{array}\right.$$
 $s=\delta$ for $\alpha=1$ and $s=1$ for $\alpha >1$.
 Now, conditions (\ref{e:momcondr}) and (\ref{eq:infmaTK3}) yield (\ref{e:Slutskyinf01}), and hence we again obtain $M_{n}(\,\cdot\,) \dto M(\,\cdot\,)$ in $(D_{\uparrow}[0,1], d_{M_{1}})$. This concludes the proof.
\end{proof}

\begin{rem}
Since $(C^{(1)}, C^{(2)})$ is independent of $(W^{(1)}, W^{(2)})$, the limiting process $M$, defined in (\ref{e:limprocess}) by
$$  M(t) = C^{(1)}W^{(1)}(t) \vee C^{(2)}W^{(2)}(t) =  \bigvee_{t_{i} \leq t}|j_{i}| ( C^{(1)}1_{\{ j_{i} >0 \}} + C^{(2)} 1_{\{ j_{i}<0 \}}), \qquad t \in [0,1],$$
where $\sum_{i}\delta_{(t_{i},j_{i})}$ is a Poisson process with intensity measure $\emph{Leb} \times \mu$, with $\mu$ as in (\ref{eq:mu}),
 conditionally on $(C^{(1)}, C^{(2)})=(a,b)$, is an extremal process with exponent measure $(pa^{\alpha} + rb^{\alpha}) \alpha x^{-\alpha-1} \rmd x$ for $x>0$ and non-negative real numbers $a$ and $b$. Indeed, for $x>0$ we have
\begin{eqnarray}\label{e:limMconditioning}
 \nonumber \Pr[ M(t) \leq x\,|\,(C^{(1)}, C^{(2)})=(a,b) ] &=& \Pr \Big( \bigvee_{t_{i} \leq t}|j_{i}| ( a 1_{\{ j_{i} >0 \}} + b 1_{\{ j_{i}<0 \}}) \leq x \Big)\\[0.4em]
 & = & \Pr \Big( \bigvee_{t_{i} \leq t}j_{i}S_{i} \leq x \Big)
\end{eqnarray}
  with $S_{i} = a1_{\{ j_{i} >0 \}} - b 1_{\{ j_{i}<0 \}}$.
  %Proposition 3.8 in \citet{Re87} yields that $\sum_{i} \delta_{(t_{i}, j_{i}, S_{i})}$ is a Poisson process with intensity measure $\emph{Leb} \times \widetilde{\mu}$, where
  %$\widetilde{\mu}(dx, dy)= \mu(dx) K(x, dy)$ and
 % $$ K(x, dy) = \Pr \big( a1_{\{ x>0 \}} - b 1_{\{ x<0 \}} \in\,dy \big).$$
 %By an application of Proposition 3.7 in \citet{Re87} we obtain that $\sum_{i}\delta_{(t_{i}, j_{i}S_{i})}$ is a Poisson process with intensity measure $\emph{Leb} \times \widehat{\mu}$, where the measure $\widehat{\mu}$ is given %by
 % $$ \widehat{\mu}(x, \infty) = \widetilde{\mu}(\{(y,z) : yz > x \}) = \int\!\!\!\int_{yz>x} \mu(dy) K(y, dz)$$
  %for $x>0$.
 Propositions 3.7 and 3.8 in Resnick~\cite{Re87} yield that $\sum_{i} \delta_{(t_{i}, j_{i}, S_{i})}$ is a Poisson process with intensity measure $\emph{Leb} \times \widetilde{\mu}$, where
  $\widetilde{\mu}(dx, dy)= \mu(dx) K(x, dy)$ and
  $ K(x, dy) = \Pr \big( a1_{\{ x>0 \}} - b 1_{\{ x<0 \}} \in\,dy \big),$
 and that $\sum_{i}\delta_{(t_{i}, j_{i}S_{i})}$ is a Poisson process with intensity measure $\emph{Leb} \times \widehat{\mu}$, where
 %the measure $\widehat{\mu}$ is given by
  $$ \widehat{\mu}(x, \infty) = \widetilde{\mu}(\{(y,z) : yz > x \}) = \int\!\!\!\int_{yz>x} \mu(dy) K(y, dz), \qquad x >0.$$
  From this we conclude that the process $ \bigvee_{t_{i} \leq \cdot}j_{i}S_{i}$
is an extremal process with exponent measure $\widehat{\mu}$ (see Resnick~\cite{Re87}, Section 4.3; and Resnick~\cite{Resnick07}, p.~161). Standard computations give
  \begin{equation*}
  \widehat{\mu}(x, \infty) =   \int_{0}^{\infty} \int_{x/y}^{\infty} K(y,dz)\mu(dy) + \int_{-\infty}^{0} \int_{-\infty}^{x/y} K(y,dz)\mu(dy)  = px^{-\alpha}a^{\alpha} + r x^{-\alpha} b^{\alpha}.
  \end{equation*}
Hence
$$ \widehat{\mu}(dx) = (pa^{\alpha} + r b^{\alpha}) \alpha x^{-\alpha-1}1_{(0,\infty)}(x)\,dx,$$
and we conclude from (\ref{e:limMconditioning}) that the limiting process $M$, conditionally on $(C^{(1)}, C^{(2)})=(a,b)$, is an extremal process with exponent measure $\widehat{\mu}$.

Note that, conditionally on $\{C_{j}=c_{j} \ \textrm{for all} \ j \geq 0\}$, where $(c_{j})_{j}$ is a sequence of real numbers, the process $X_{i}$ in (\ref{e:MArandom}) is a linear process with deterministic coefficients $(c_{j})$.
Therefore,
%applying Proposition 4.28 in \citet{Re87} about the functional convergence of partial maxima processes of linear processes with deterministic coefficients,
Proposition 4.28 in Resnick~\cite{Re87} yields that
the limit of $M_{n}$ in $D_{\uparrow}[0,1]$ with the $M_{1}$ topology is a process which is, conditionally on $\{C_{j}=c_{j} \ \textrm{for all} \ j \geq 0\}$, an extremal process with exponent measure $(pc_{+}^{\alpha} + rc_{-}^{\alpha}) \alpha x^{-\alpha-1} \rmd x$ for $x>0$, provided $c_{+}p + c_{-}r>0$, where $c_{+}= \max\{c_{j} \vee 0 : j \geq 0\}$ and $c_{-}= \max\{ - c_{j} \vee 0 : j \geq 0\}$. This corresponds to the above considerations about the structure of the limiting process for $a=c_{+}$ and $b=c_{-}$.
\end{rem}

\begin{rem}
 If the sequence $(C_{j})$ is deterministic, condition (\ref{e:mod1}) can be dropped since it is implied by (\ref{e:momcondr}). Note that condition (\ref{e:momcondr}) implies $|C_{j}|^{\delta} < 1$ for large $j$, and since $|C_{j}|^{\delta x}$ is decreasing in $x$, it follows that for large $j$
$$ |C_{j}|^{\gamma} = (|C_{j}|^{\delta})^{\gamma/\delta} \leq |C_{j}|^{\delta}.$$
This suffices to conclude that (\ref{e:mod1}) holds.
 In general this does not hold when the coefficients are random (see Krizmani\'{c}~\cite{Kr19}, p.~739).
\end{rem}

\section*{Acknowledgements}
 This work has been supported in part by University of Rijeka research grants uniri-prirod-18-9 and uniri-pr-prirod-19-16 and by Croatian Science Foundation under the project IP-2019-04-1239.

%\bibliographystyle{amsplain}
%%
% requires a BiBTeX file sample.bib
%\bibliography{sample}

\end{document}